\newcommand{\hdarrow}{\stackrel{d_H}{\longrightarrow}}
\newcommand{\ghdarrow}{\stackrel{d_{GH}}{\longrightarrow}}
\DeclareMathOperator{\Tan}{Tan}
\newcommand{\MC}{\mathcal{M}_C}
\title{Locally rich compact sets}
\author{Changhao Chen}
\author{Eino Rossi}
\address{Changhao Chen\\
Department of Mathematical Sciences, P.O. Box 3000, 90014
University of Oulu, Finland}
\email{changhao.chen@oulu.fi}
\address{Eino Rossi\\
Department of Mathematics and Statistics, P.O. Box 35 (MaD) FI-40014 University of Jyv{\"a}skyl{\"a}, Finland}
\email{eino.rossi@jyu.fi}
\subjclass[2000]{Primary 28A80, Secondary 37F40}
\keywords{Tangent sets, category, Gromov-Hausdorff distance, locally rich}
\date{\today}
\thanks{Both authors were supported by the Vilho, Yrj{\"o}, and Kalle V{\"a}is{\"a}l{\"a} foundation.}
\begin{document}

\begin{abstract}
We construct a compact metric space that has any other compact metric space as a tangent at all points, with respect to the Gromov-Hausdorff distance. Furthermore, we give examples of compact sets in the Euclidean unit cube, that have almost any other compact set of the cube as a tangent at all points or just in a dense subset. Here the ``almost all compact sets'' means that the tangent collection contains a contracted image of any compact set of the cube and that the contraction ratios are uniformly bounded. In the Euclidean space, the distance of subsets is measured by the Hausdorff distance. Also the geometric properties and dimensions of such spaces and sets are studied.
\end{abstract}
\maketitle

\section{Introduction}
Tangent measures and sets  give information of the local structure of a Radon measure or a compact set. These tangent objects often have more regular structure than the original object in question. For example, a tangent set of a totally disconnected self-affine carpet can contain intervals, see \cite{BandtKaenmaki2013}. However, this is not always the case. O'Neil \cite{ONeil1995} constructed a Radon measure $\mu$ on $\R^d$ that has any other Radon measure of $\R^d$ as a tangent measure at $\mu$ almost all points. Furthermore, in his PhD thesis \cite{Oneil1994}, he showed that this is a typical property of Radon measures. This was later re-proved by Sahlsten \cite{Sahlsten2014} by using a different method. Buczolich studied the (micro-)tangent sets of continuous functions and obtained results in a very similar fashion. He proved that the graph of a typical continuous function on [0,1] has a graph of any other continuous function (on [-1,1] for which $f(0)=0$) as a tangent set at $(x,f(x))$ for Lebesgue 
almost 
all $x\in(0,1)$, see \cite[Theorem 5]{Buczolich2003}. This work was later extended to the local maximum and minimum points of $f$ by Buczolich and R\'{a}ti \cite{BuczolichRati2006}. The dynamics of the ``zooming in'' operation or ``scenery flow'' has also been studied. For example, \cite{BedfordFisher1996,BedfordFisher1997,BedfordFisherUrbanski2002} study the scenery flow of Cantor sets and Julia sets. Furstenberg \cite{Furstenberg2008} studied the distribution of tangent measures in Euclidean unit cube and this idea has been further developed in the study of typical tangent measure distributions, see \cite{Hochman2010,KaenmakiSahlstenShmerkin2013,KaenmakiSahlstenShmerkin2014}.

We study compact metric spaces and compact subsets of Euclidean spaces, and their tangent properties. In our results, there are no measures involved. We call a compact metric space \emph{locally rich}, if it has a ``large'' collection of tangent spaces at all points. A precise definition will be given in Definition \ref{richmetricdef}. Our main result is that there exist locally rich spaces. We also show that locally richness is a typical property. A property $P$ of points $x \in X$ is satisfied for \emph{typical} $x \in X$ if the complement of set 
\[
 \{x \in X : x \text{ satisfies } P\}
\] 
is of first category. A subset of a topological space $X$ is of first category, if it is a countable union of sets whose closure in $X$ has empty interior; otherwise it is called of second category.  

The distance of compact sets $K$ and $F$ in a metric space $(X,d_X)$ can be measured by the Hausdorff distance, defined by
\[
 d_H(K,F)=\inf\{\eps:F\subset K^\eps\text{ and }K\subset F^\eps\},
\]
where $K^\eps=\{x\in X:\dist(x,K)<\eps\}$. One could highlight the ambient space by denoting $d_H^X$, but in this work the space in which distances are measured is always clear from the context, so we only write $d_H$. We use the standard notations $\dimh, \adimm, \ydimm$, and $ \dima$ to denote the Hausdorff, lower Minkowski, upper Minkowski and Assouad dimension, respectively. For basic properties of these dimensions, we refer to \cite{Falconer1990,Mattila1995} and especially \cite{Luukkainen1998} for the Assouad dimension.

The paper is organized as follows.  In Section 1, we present the basic definitions. In Sections 2 and 3 we study locally rich sets in metric spaces and Euclidean spaces respectively. In Section 4, we construct an infinitely generated self-similar set and consider its tangent sets. In Section 5, we study the geometric properties of locally rich sets. To finish the paper, we consider other ways to zoom sets.

\subsection{Tangents of a metric space}
The distance of different metric spaces is measured by the Gromov-Hausdorff distance. For references, see \cite{AmbrosioTilli2004,Gromov1999,Heinonen2003} or \cite{HeinonenKoskelaShanmugalingamTyson2011}. The idea is that the distance of two separable metric spaces is measured by the infimum of the Hausdorff distances of their isometric images in $l^\infty$, the space of bounded real valued sequences. This is possible by the Fréchet embedding theorem\footnote{This theorem is sometimes represented as a consequence of the Kuratowski embedding theorem. We learned from \cite[Notes to chapter 3]{HeinonenKoskelaShanmugalingamTyson2011} that actually Fréchet \cite{Frechet1909} proved his theorem already in 1909, while Kuratowski's paper \cite{Kuratowski1935} appeared in 1935.} \cite[Theorem 3.1.11]{HeinonenKoskelaShanmugalingamTyson2011}. When only considering compact spaces, we could instead use embedding to $C([0,1])$, the space of continuous functions on $[0,1]$, by a theorem of Banach \cite[
Théorème 9, p. 185]{Banach1955}. 
This space has the advantage of being separable. However, $l^\infty$ is enough for us, since we only need the fact that $l^\infty$ is a normed space. The Gromov-Hausdorff distance of two separable metric spaces $X$ and $Y$ is defined as
\[
 d_{GH}(X,Y)=\inf_{i,j}d_H(i(X),j(Y)),
\]
where the Hausdorff distance is considered in $l^\infty$ and the infimum is taken over all isometric embeddings of $X$ and $Y$ to $l^\infty$. This distance does not make a difference between isometric spaces and can, on the other hand, give the value $\infty$ for unbounded spaces. Therefore it is reasonable to consider only the isometry classes of non-empty compact metric spaces. A rather standard notation for this collection is $\MC$. The following proposition about $(\MC,d_{GH})$ is the base of our work on metric spaces.

\begin{proposition}\label{metricprop}
The distance $d_{GH}$ is a metric on $\MC$. Furthermore, the space $(\MC,d_{GH})$ is complete and separable, and the countable dense set can be chosen to be the set of finite spaces, where all distances are rational.
\end{proposition}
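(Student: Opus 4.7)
My plan is to address the four assertions separately, with the essential work concentrated in the triangle inequality, a rational perturbation step, and the completeness argument; I expect completeness to be the main obstacle. Non-negativity and symmetry of $d_{GH}$ are immediate. The useful preliminary observation I would establish is the equivalent formulation
\[
d_{GH}(X,Y)=\inf\{d_H^Z(i(X),j(Y))\},
\]
where $Z$ ranges over all metric spaces and $i:X\to Z$, $j:Y\to Z$ over all isometric embeddings. The ``$\leq$'' direction is trivial since $l^\infty$ is one such $Z$, while ``$\geq$'' follows by isometrically embedding the separable space $i(X)\cup j(Y)$ into $l^\infty$ via the Fr\'echet theorem quoted above. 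With this formulation the triangle inequality becomes a gluing argument: given embeddings of $X,Y$ into $W_1$ and of $Y,Z$ into $W_2$ that nearly realize $d_{GH}(X,Y)$ and $d_{GH}(Y,Z)$, I would identify the two copies of $Y$ inside $W_1\sqcup W_2$ and define the pseudo-metric $d(a,b)=\inf_{y\in Y}[d_{W_1}(a,y)+d_{W_2}(y,b)]$ whenever $a\in W_1,b\in W_2$; the inequality then follows from the triangle inequality for $d_H$ in this glued space. To see that $d_{GH}(X,Y)=0$ forces $X\cong Y$ in $\MC$, I would take a sequence of near-minimal embeddings, use compactness of $X,Y$ and the Arzel\`a--Ascoli theorem to extract a limit map $f:X\to Y$, and verify it is a surjective isometry.

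\textbf{Separability and the rational dense family.} For $X\in\MC$ and $\eps>0$, pick a finite $\eps$-net $F\subset X$; the identity embedding of $X$ into itself gives $d_{GH}(X,F)\leq d_H(X,F)<\eps$. It remains to approximate the metric on $F$ by a rational one, which needs some care because na\"ive rounding can violate the triangle inequality. My fix is to add a small fixed $\delta>0$ to every off-diagonal distance (this preserves the triangle inequality with $\delta$ of slack) and then round each entry to a rational value within $\delta/4$. The resulting finite rational-distance space lies within $2\delta$ of $F$ in $d_{GH}$, and only countably many such spaces exist up to isometry, producing the required countable dense family.

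\textbf{Completeness.} Given a Cauchy sequence $(X_n)\subset\MC$, I would extract a subsequence $X_{n_k}$ with $d_{GH}(X_{n_k},X_{n_{k+1}})<2^{-k}$ and then inductively build a single metric space $Y$ containing isometric copies of all $X_{n_k}$ with $d_H^Y(X_{n_k},X_{n_{k+1}})<2^{-k}$, by iterating the gluing construction from the triangle inequality step. In the completion $\overline Y$ the compact sets $X_{n_k}$ form a $d_H$-Cauchy sequence; since $d_H$ is complete on the closed bounded subsets of a complete metric space and the Cauchy condition forces uniform total boundedness, the $X_{n_k}$ converge in $d_H^{\overline Y}$ to a compact set $X_\infty\subset\overline Y$. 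Hausdorff convergence in this common ambient space yields $d_{GH}(X_{n_k},X_\infty)\to 0$, and since $(X_n)$ is Cauchy, the full sequence converges to $X_\infty$ in $\MC$.
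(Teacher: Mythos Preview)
Your proof sketch is correct and follows the standard arguments found in the references the paper cites. Note, however, that the paper does not actually prove this proposition: it simply records that the metric property is \cite[Proposition 4.5.2]{AmbrosioTilli2004}, completeness is \cite[Theorem 4.5.6]{AmbrosioTilli2004}, and separability is \cite[Theorem 2.4]{Heinonen2003}, and moves on. So there is no proof in the paper to compare against; Proposition~\ref{metricprop} is treated as background.

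Your write-up is nonetheless the standard route through those references: the gluing pseudo-metric for the triangle inequality, a diagonal/Arzel\`a--Ascoli extraction for positivity, iterated gluing plus Blaschke-type completeness of $d_H$ for completeness, and finite $\eps$-nets for separability. One point worth highlighting is that your rational-perturbation step (inflate all off-diagonal distances by $\delta$ to create slack, then round) is exactly what is needed to justify the specific claim in the proposition that the dense set can be taken to consist of finite spaces with \emph{rational} distances---the paper asserts this but does not spell it out beyond the citation, so you are in fact supplying a detail the paper leaves implicit.
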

The fact that $(\MC,d_{GH})$ is a metric space, is proved in \cite[Proposition 4.5.2]{AmbrosioTilli2004}, completeness in \cite[Theorem 4.5.6]{AmbrosioTilli2004}, and separability in \cite[Theorem 2.4]{Heinonen2003}. Since $\MC$ is complete, Baire's theorem tells that it is of second category, and we may talk about typicality.

Let $(X,d_X)$ be a compact metric space and $x\in X$. We define the zooming  map $T_{r,x}$ for all $x\in X$ and $r>0$ by
\[
 T_{x,r}(X,d_X)=(B(x,r), r^{-1}d|_{B(x,r)}),
\]
where $B(x,r)$ is the closed ball in the original metric of $X$. The metric is often clear from the context and then we may write $T_{x,r}(X)$ for short. We say that a compact metric space $Y$ is a tangent space of $X$ at $x$ if there exists a sequence $r_i\searrow 0$ so that
\[
 T_{x,r_i}(X)\ghdarrow Y
\]
as $i\to\infty$. We denote the collection of all tangent spaces of $X$ at $x$ by $\Tan(X,x)$. Observe that $\Tan(X,x)$ is a closed set in $\MC$. From the definition of $T_{x,r}$ it is clear that $\diam T_{x,r}(X)\leq 2$ for any $X$. On the other hand, this could be modified to any other constant by adding a constant scaling factor in front of $r^{-1}$ in the definition of $T_{x,r}$. Therefore it is reasonable to focus on some class of compact metric spaces with a uniformly bounded diameter. Since our interest is in the geometry of the spaces and not so much on the diameter, we just choose to focus on the spaces of diameter at most one. Let us denote $\MC(1)=\{K\in\MC:\diam K \leq 1\}$. This subset is closed, and so it is also complete. It again follows, from the well known theorem of Baire, that $\MC(1)$ is of second category. 

\begin{definition}
\label{richmetricdef}
A metric space $X\in\MC(1)$ is called \emph{locally rich} if it satisfies $\Tan(X,x)=\MC(1)$ for all $x\in X$.
\end{definition}

\begin{remark}
Our definition of a tangent space of a metric space is not the standard one, which is also called weak tangent. It's more common to use the pointed Gromov-Hausdorff convergence to define tangents of metric spaces. For the definition of pointed Gromov-Hausdorff convergence, see \cite{BugaroBugaroIvanov2001} or \cite{HeinonenKoskelaShanmugalingamTyson2011}. A pointed metric space is a triple $(X, d_X, x)$ where $(X, d_X)$ is a metric space and $x \in X$. A pointed metric space $(W, d_W, w)$ is called a (weak) tangent of $(X, d_X)$ if there exists a sequence points $(x_n)^{\infty}_{n=1} \subset X$ and a sequence $t_n \searrow 0$ such that the pointed metric spaces $(X, t_n^{-1}d_X, x_n)$ converge in the pointed Gromov-Hausdorff convergence to $(W, d_W, w)$. If $x_n =x$ for all $n \in \N$, then  $(W, d_W, w)$ is called a (weak) tangent of $(X, d_X)$ at point $x$. For more details we refer to \cite{BugaroBugaroIvanov2001,HeinonenKoskelaShanmugalingamTyson2011}, and for some applications to geometry, we refer to
\cite{MackayTyson2010,TysonWu2005}. Note that for compact metric spaces  the pointed Gromov-Hausdorff convergence is ''equivalent'' to the ordinary Gromov-Hausdorff  convergence, see \cite[Exercise 8.1.2]{BugaroBugaroIvanov2001} or \cite[Proposition 10.3.5]{HeinonenKoskelaShanmugalingamTyson2011}.

Our definition was motivated by the mini-set of a compact set of Furstenberg \cite{Furstenberg2008} and  many recent works about the local structure of sets and measures, see \cite{BandtKaenmaki2013,Buczolich2003, BuczolichRati2006, Hochman2010,
KaenmakiSahlstenShmerkin2013,KaenmakiSahlstenShmerkin2014}. Our definition of tangent space only considers the ``unite ball'' of $(X, t_n^{-1}d_X, x)$ for all $n\in \N$ with center $x$, since $T_{x,t_n}(X, d_X)$ is the unit ball of $(X, t_n^{-1}d_X, x)$ with center $x$. Both  tangent spaces reflect the local structure of $X$, but the weak tangent of a metric space is often an unbounded metric space. We show one connection of the two different ways of defining tangents by the following fact. 

Let $X$ be a compact metric space, $(W, d_W, w)$ be a weak tangent of $X$ at point $x$, and $(W, d_W, w)$ be a length space. For the definition of length space, see \cite[Definition 2.1.6]{BugaroBugaroIvanov2001}. We assume that $X_n:=(X, t_n^{-1}d_X, x)$ converges to $(W, d_W, w)$, in the pointed Gromov-Hausdorff convergence. The result of \cite[Exercise 8.1.3]{BugaroBugaroIvanov2001} implies that  
 \[
 B_{X_n}(x_n, r) \ghdarrow B_W(w,r)
  \text{~~for every r > 0}.
 \] 
Recall that 
$T_{x,t_n} (X) = B_{X_n}(x, 1)$. Thus we have that $B_W(w,1) \in \Tan(X,x)$.
\end{remark}

\subsection{Tangent sets in Euclidean spaces}
When considering compact subsets of Euclidean space, we could of course measure distances with the Gromov-Hausdorff distance. Instead of working in $l^\infty$, a more natural way would be to modify $d_{GH}$ so that the infimum is taken over isometries of the Euclidean space. This is sometimes called the Euclidean Gromov-Hausdorff distance, and denoted by $d_{EH}$. It is obvious that $d_{GH}\leq d_{EH}\leq d_H$, so results concerning tangents obtained for $d_H$ are  also valid for $d_{GH}$ and $d_{EH}$. Our aim is to obtain tangent results with respect to $d_H$, and when this is not possible we allow a small scaling and translation, to obtain the desired tangent results. With this in mind, we give following notations.

Let $Q=[-1,1]^d$ and $\KK$ be the space of all non-empty compact subsets of $Q$. Let $\KK$ be endowed with the Hausdorff metric. It is fairly easy to show that $(\KK,d_H)$ is a compact metric space. For a proof in a more general case, see \cite[2.10.21]{Federer1969}. For sub-collections $\A$ and $\mathcal{B}$  of $\KK$, we write $\A \lesssim  \mathcal{B}$, if there exists $\lambda_0 >0$, so that for every $A \in\A$, there exists a set $B \in \BB$, a real number $\lambda_0<\lambda<\lambda_0^{-1}$, a vector $a \in A$ and a vector $b \in B$, so that
\[
\lambda (A-a) = B - b.
\]
This relation is reflexive and transitive but not symmetric in general. If $\A\lesssim\BB$ and $\BB\lesssim\A$, then we write $\A\approx\BB$. The relation $\approx$ is an equivalence relation. Note that for any sub-collection $\A\subset\KK$, we trivially have $\A\lesssim\KK$ so to prove that $\A\approx\KK$ it suffices to show that $\KK\lesssim\A$. This fact is often used when we study tangent collections in Euclidean spaces.

Define mappings $\hat{T}_{x,t}\colon\R^d\to\R^d$, $\hat{T}_{x,t}(y)=\frac{y-x}{t}$ and $T_{x,t}\colon \KK\to\KK$,
\[
 T_{x,t}(E)=\hat{T}_{x,t}(E)\cap Q
\]
for all $x\in Q$ and $t>0$. We say that $F\in\KK$ is a tangent set of $E\in\KK$ at $x\in E$ if there exists a sequence $t_n\searrow 0$ so that
\[
 T_{x,t_n}(E)\hdarrow F.
\]
Since there is no danger of misunderstanding, we use same notation $\Tan(E,x)$ to denote the collection of all tangent sets of a set $E$ at $x$ as we did for the collection of all tangent spaces of a metric space $X$ at $x$. Due to compactness of $\KK$, this collection is never empty.

For $x\in\R^d$ and $t>0$, we often use the notation $Q(x,t)=[x-t,x+t]^d$. The Euclidean norm and the usual $\max$-norm of $x$ are denoted by $|x|$ and $\|x\|_{\max}$ respectively.

\begin{definition}
A compact set $K\in\KK$ is called \emph{locally rich at} $x$ if $\Tan(K,x)\approx\KK$. It is called \emph{locally rich} if $\Tan(K,x)\approx\KK$ for all $x\in K$.
\end{definition}

\subsection{Symbol space notations} The symbol space notations will often be used in our constructions. For the reader's convenience, we summarize them here. Let $\Sigma=\prod_{i=1}^\infty X_i$, where $X_i$ are fixed arbitrary sets.
An element $\omega$ of $\Sigma$ is a mapping $\omega\colon \N \to \bigcup_{i=1}^\infty X_i$ so that $\omega(n)\in X_n$ for all $n\in\N$. One can also think of $\omega\in\Sigma$ as a sequence $(\omega(1),\omega(2)\ldots)$. For any $n\in\N$ we denote $\Sigma_n=\prod_{i=1}^n X_i$. When $\omega\in\Sigma_n$ we denote $|\omega|=n$ and call this $n$ the length of $\omega$. If $\omega\in\Sigma$ then the length of $\omega$ is infinite. If the length of $\omega$ is greater than $n$, then let $\omega|_n$ denote the restriction of $\omega$ to length $n$. That is $\omega|_n\in\Sigma_n$ and $\omega|_n(m)=\omega(m)$, for all $m\leq n$. When $\omega\neq\omega'$, we let $\omega\wedge\omega'$ denote the symbol $\omega|_N$, where $N=\max_{n\in\N}\{n:\omega|_n=\omega'|_n\}$. When the length of $\omega$ is $n$, we denote $[\omega]=\{\omega'\in\Sigma:\omega=\omega'|_n\}$, and call it a cylinder. Note that the cylinder consists of sequences of the form
\[
 \omega\omega'=(\omega(1),\omega(2),\ldots,\omega(n),\omega'(1),\omega'(2),\ldots)
\]
for $\omega'$ of  infinite length.

\section{Locally rich metric spaces}\label{locricmet}
Our main result is that there exists a locally rich metric space. In Theorem \ref{metricmain} we construct an example of such a space. We also study the dimensional properties of this space. Finally, Theorem \ref{metrictypical} shows that a typical compact metric space is locally rich.

By Proposition \ref{metricprop}, we can choose a dense sequence  $(\gamma(n))_{n=1}^{\infty}$ in $\MC(1)$ so that each element is a finite space with rational distances. Set $\delta_n=\min\{d_{\gamma(n)}(x,y):x\neq y\}$ and fix a sequence $r_n\searrow 0$ so that $r_1=1$ and $r_n < \delta_{n-1}$ for all $n\geq 2$. Let $\rr(n)=\prod_{i=1}^n r_i$.

Heuristically, we want to scale the space $\gamma(2)$ and put it into a small neighborhood of each point of $\gamma(1)$ so that when zooming into any point of $\gamma(1)$ with an appropriate scale, one would approximately see $\gamma(2)$. Next we would scale $\gamma(3)$ into a smaller neighborhood of each point of the previous space so that an appropriate zoom gives a better approximation of $\gamma(3)$. It is possible to continue this way, but the lack of an ambient space, other than $l^{\infty}$, forces us to define new metric spaces at each step. On the other hand, metric spaces are just points and distances, so it is possible to just give the distances, say in a matrix, in these finite cases. Finally one could wish that this sequence converges to a compact metric space, and that the limit space is locally rich. However, we do not actually do such a construction, since the simplest way is to just give the final compact metric space and then prove that is has the desired properties.

Set $\Sigma=\prod_{n=1}^{\infty} \gamma(n)$. For $\omega,\omega'\in\Sigma$,  define
\begin{align*}
 d_{\Sigma}(\omega,\omega')
 &= \max_{n\in\N} \rr(n) d_{\gamma(n)}( \omega(n),\omega'(n) ).
\end{align*}
The structure of $\Sigma$ is illustrated in Figure \ref{sigmapicture}. Recall that $r_{n+1}< d_{\gamma(n)}(x,y)\leq 1$ for all different $x,y\in \gamma(n)$. It is straightforward to check that $(\Sigma,d_{\Sigma})$ is a metric space. Since $\gamma(n)\in\MC(1)$ for all $n\in\N$, we have $\diam(\Sigma)=\diam(\gamma(1))\leq 1$. Note also that since $r_{n+1}<\delta_n$, the definition of the metric is equivalent to $d_{\Sigma}(\omega,\omega') = \rr(n)d_{\gamma(n)}(\omega(n),\omega'(n))$, where $n=|\omega\wedge\omega'|+1$. Also, it is clear that $(\Sigma,d_{\Sigma})$ is compact. It is even possible to construct a converging sub-sequence by hand from a given sequence, since each element of the product is finite. On the other hand, one could argue that the metric $d_{\Sigma}$ gives the standard product topology and therefore $(\Sigma,d_{\Sigma})$ is compact by Tihonov's theorem. Now all that is left, is to show that $(\Sigma,d_{\Sigma})$ is locally rich. This is done in Theorem \ref{metricmain}, but first we 
give an approximation lemma. 

\begin{lemma}\label{inclusion}
 For all $\omega\in\Sigma$, we have $d_{GH}(T_{\omega,\rr(n)}(\Sigma),\gamma(n))\leq r_{n+1}$.
\end{lemma}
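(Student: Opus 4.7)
My plan is to identify the closed ball $B(\omega,\rr(n))$ explicitly, then exhibit an approximate isometry onto $\gamma(n)$ by projection to the $n$th coordinate, and finally convert this into a Gromov--Hausdorff bound through a correspondence.

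\medskip

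\textbf{Step 1: Describe the ball.} I would first show that
\[
B(\omega,\rr(n)) = \{\omega'\in\Sigma : \omega'(k)=\omega(k)\text{ for every }k<n\}.
\]
The inclusion $\supseteq$ is immediate: if $\omega'$ agrees with $\omega$ on the first $n-1$ coordinates, then $d_\Sigma(\omega,\omega')=\max_{k\geq n}\rr(k)d_{\gamma(k)}(\omega(k),\omega'(k))\leq \rr(n)$, since $\rr(k)\leq \rr(n)$ for $k\geq n$ and $d_{\gamma(k)}\leq 1$. For $\subseteq$, if $d_\Sigma(\omega,\omega')\leq \rr(n)$, then for every $k<n$,
\[
d_{\gamma(k)}(\omega(k),\omega'(k))\leq \rr(n)/\rr(k)\leq r_{k+1}<\delta_k,
\]
and the definition of $\delta_k$ forces $\omega'(k)=\omega(k)$.

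\medskip

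\textbf{Step 2: Approximate isometry.} Define $\pi\colon B(\omega,\rr(n))\to\gamma(n)$ by $\pi(\omega')=\omega'(n)$. This is onto, because for any $y\in\gamma(n)$ we can prescribe $\omega'(k)=\omega(k)$ for $k<n$, $\omega'(n)=y$, and $\omega'(k)$ arbitrary for $k>n$, obtaining $\omega'\in B(\omega,\rr(n))$ with $\pi(\omega')=y$. For $\omega',\omega''\in B(\omega,\rr(n))$ the rescaled distance is
\[
\rr(n)^{-1}d_\Sigma(\omega',\omega'') = \max_{k\geq n}\frac{\rr(k)}{\rr(n)}\,d_{\gamma(k)}(\omega'(k),\omega''(k)),
\]
since the coordinates below $n$ agree. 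The $k=n$ summand equals $d_{\gamma(n)}(\pi(\omega'),\pi(\omega''))$, and for $k>n$ we have $\rr(k)/\rr(n)\leq r_{n+1}$ and $d_{\gamma(k)}\leq 1$. Using the elementary bound $0\leq \max(a,b)-a\leq b$ for nonnegative reals,
\[
\bigl|\rr(n)^{-1}d_\Sigma(\omega',\omega'') - d_{\gamma(n)}(\pi(\omega'),\pi(\omega''))\bigr|\leq r_{n+1}.
\]

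\medskip

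\textbf{Step 3: From distortion to $d_{GH}$.} The set $R=\{(\omega',\pi(\omega')):\omega'\in B(\omega,\rr(n))\}$ is a correspondence between $T_{\omega,\rr(n)}(\Sigma)$ and $\gamma(n)$ whose distortion is at most $r_{n+1}$ by Step~2. By the standard characterization of Gromov--Hausdorff distance via correspondences (see, e.g., \cite[Theorem~7.3.25]{BugaroBugaroIvanov2001}), $d_{GH}\leq \tfrac{1}{2}\mathrm{dis}(R)\leq r_{n+1}$, which is the desired bound.

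\medskip

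There is no real obstacle: once the ball is recognized as a cylinder set (which is forced by the choice $r_{k+1}<\delta_k$), the rest is a direct computation. The only subtle point is keeping track of which coordinates contribute to the rescaled metric and checking that the tail contribution is geometrically small, which is exactly where the factor $r_{n+1}$ enters.
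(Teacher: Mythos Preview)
Your proof is correct. The identification of the ball as the cylinder $[\omega|_{n-1}]$ and the use of the $n$th coordinate are exactly what the paper does as well. The only difference is in the final conversion to a Gromov--Hausdorff bound: the paper fixes a tail $\alpha\in\prod_{i>n}\gamma(i)$, embeds $\gamma(n)$ isometrically into $T_{\omega,\rr(n)}(\Sigma)$ via $x\mapsto \omega|_{n-1}\,x\,\alpha$, and then observes that every point of the zoomed space is within $r_{n+1}$ of this embedded copy, giving a Hausdorff-distance bound directly from the definition of $d_{GH}$. You instead project the other way and invoke the correspondence characterization of $d_{GH}$. Your route uses one extra cited fact but in return yields the sharper bound $d_{GH}\leq \tfrac{1}{2}r_{n+1}$; the paper's route is marginally more self-contained. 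Either way the argument is essentially the same.
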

\begin{proof}
 For any $\omega\in\Sigma$ the space $T_{\omega,\rr(n)}(\Sigma)$ equals to $[\omega|_{n-1}]$ with the metric $\rr(n)^{-1}d_{\Sigma}$ (restricted to $[\omega|_{n-1}]$). Let $\alpha\in\prod_{i=n+1}^{\infty}\gamma(i)$. Then the set
 \[
  \{\omega|_{n-1} x \alpha:x\in\gamma(n)\}\subset T_{\omega,\rr(n)}(\Sigma)
 \]
 is isometric to $\gamma(n)$. The isometry is given by $x\mapsto\omega|_{n-1} x \alpha$, since
 \begin{align*}
  d_{T_{\omega,\rr(n)}(\Sigma)}(\omega|_{n-1} x \alpha,\omega|_{n-1} y \alpha)
  &= \rr(n)^{-1}d_{\Sigma}(\omega|_{n-1} x \alpha,\omega|_{n-1} y \alpha)
   =d_{\gamma(n)}(x,y).
 \end{align*}
 Letting $i$ denote this isometry in question, we clearly have
 \[
  d_H \big( T_{\omega,\rr(n)}(\Sigma), i(\gamma(n)) \big)\leq r_{n+1}
 \]
and so $d_{GH}(T_{\omega,\rr(n)}(\Sigma),\gamma(n))\leq r_{n+1}$.
\end{proof}

\begin{figure}
\centering
\includegraphics{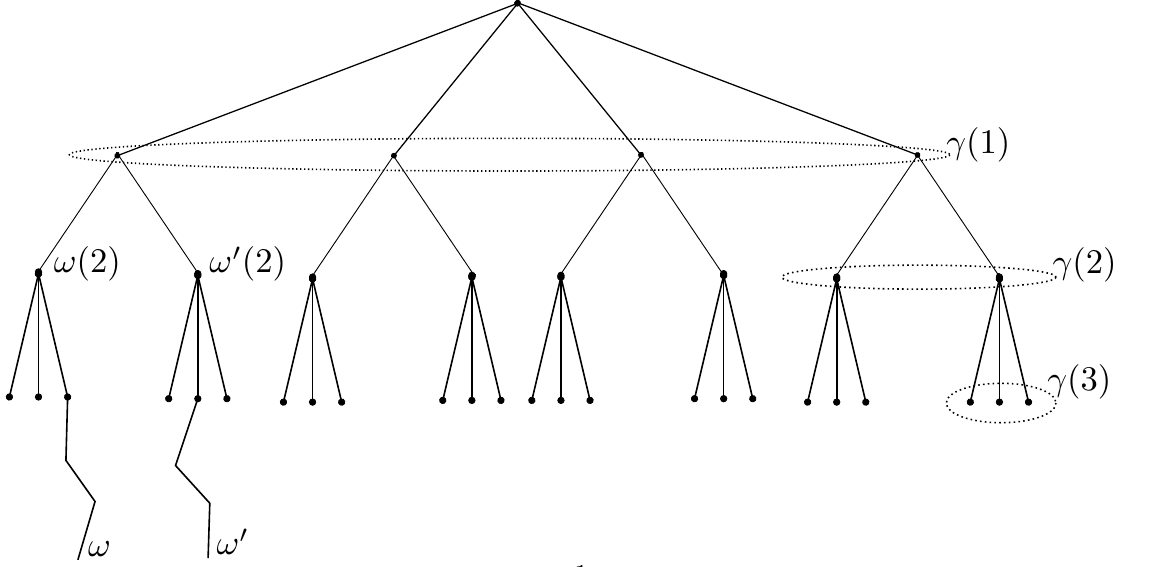}
\caption{The space $\Sigma$ is a tree, where the number of offspring in each vertex at level $n$ is $\#\gamma(n+1)$. Here the distance between $\omega$ and $\omega'$ equals to $\rr(2)d_{\gamma(2)}(\omega(2),\omega'(2))$. }
\label{sigmapicture}
\end{figure}

\begin{theorem}
\label{metricmain}
 For all $\omega\in\Sigma$, we have $\Tan(\Sigma,\omega)=\MC(1)$.
\end{theorem}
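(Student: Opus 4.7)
My plan is to use the density of the sequence $(\gamma(n))_{n=1}^\infty$ in $(\MC(1), d_{GH})$ guaranteed by Proposition \ref{metricprop} together with the approximation estimate in Lemma \ref{inclusion}, to realize every $Y \in \MC(1)$ as a Gromov--Hausdorff limit of zooms $T_{\omega, \rr(n_k)}(\Sigma)$ along a suitable subsequence of the special scales $\rr(n)$. This is the substantive half, $\MC(1) \subseteq \Tan(\Sigma, \omega)$; the reverse containment is a bookkeeping matter once the ball structure of $\Sigma$ is unwound.

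Fix $\omega \in \Sigma$ and an arbitrary $Y \in \MC(1)$. I first extract from the dense sequence $(\gamma(n))$ a subsequence $(n_k)$ such that $d_{GH}(\gamma(n_k), Y) \to 0$. Lemma \ref{inclusion} then gives
\[
 d_{GH}(T_{\omega, \rr(n_k)}(\Sigma), \gamma(n_k)) \leq r_{n_k + 1},
\]
and $r_n \searrow 0$ makes the right-hand side vanish as $k \to \infty$. The triangle inequality in $(\MC, d_{GH})$ therefore yields $d_{GH}(T_{\omega, \rr(n_k)}(\Sigma), Y) \to 0$, and because $\rr(n_k) = \prod_{i=1}^{n_k} r_i \to 0$ the definition of tangent space delivers $Y \in \Tan(\Sigma, \omega)$. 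Since $Y$ was arbitrary, this gives $\MC(1) \subseteq \Tan(\Sigma, \omega)$.

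Most of the work has been front-loaded into Lemma \ref{inclusion}, which already encodes the fact that the cylinder $[\omega|_{n-1}]$ rescaled by $\rr(n)^{-1}$ is a tiny Gromov--Hausdorff perturbation of $\gamma(n)$; the argument above is then essentially a density-plus-triangle-inequality bookkeeping step. The only point one has to watch is that the subsequence $(n_k)$ extracted purely on the $\MC(1)$-density side produces scales $\rr(n_k)$ decreasing to zero, and this is automatic from $r_n \searrow 0$, so no balancing of competing constraints is needed. The reverse inclusion $\Tan(\Sigma, \omega) \subseteq \MC(1)$ comes from the diameter bound $\diam \Sigma \leq 1$ combined with the tree-like formula $d_\Sigma(\omega, \omega') = \rr(n) d_{\gamma(n)}(\omega(n), \omega'(n))$ with $n = |\omega \wedge \omega'| + 1$, which together control $\diam T_{\omega, r}(\Sigma)$ for every $r > 0$.
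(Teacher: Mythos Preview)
Your argument for the inclusion $\MC(1)\subseteq\Tan(\Sigma,\omega)$ is correct and is essentially identical to the paper's: both extract, via the density of $(\gamma(n))$ in $\MC(1)$, indices along which $\gamma(n)$ approximates the target space, invoke Lemma~\ref{inclusion} for the bound $d_{GH}(T_{\omega,\rr(n)}(\Sigma),\gamma(n))\le r_{n+1}$, and conclude by the triangle inequality. The paper phrases this with an $\varepsilon$ and ``infinitely many $n$'', you phrase it with a subsequence $(n_k)$; there is no substantive difference.

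One small remark: the paper's proof does not address the reverse inclusion $\Tan(\Sigma,\omega)\subseteq\MC(1)$ at all, while you gesture at it via the diameter bound and the tree formula. Be aware that this direction is not as automatic as you suggest: for a general radius $r$, the rescaled ball $T_{\omega,r}(\Sigma)$ can have diameter up to $2$ (take three points $a,b,c\in\gamma(m)$ with $d(a,b)=d(a,c)=\tfrac12$, $d(b,c)=1$, set $\omega(m)=a$ and $r=\rr(m)/2$). So ``$\diam\Sigma\le 1$ plus the tree formula'' does not immediately control $\diam T_{\omega,r}(\Sigma)$ by $1$ for all $r$. The paper sidesteps this entirely and effectively proves only the $\supseteq$ direction, which is the content that matters for local richness.
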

\begin{proof}
 Let $K\in\MC(1)$ and $\omega\in\Sigma$. For any $\eps$, we find an integer $n$, in fact infinitely many $n$, so that $d_{GH}(K,\gamma(n))\leq\eps$ and $r_{n+1}\leq\eps$. By Lemma \ref{inclusion}, we have
 \[
  d_{GH}(T_{\omega,\rr(n)}(\Sigma),K)
  \leq d_{GH}(T_{\omega,\rr(n)}(\Sigma),\gamma(n))+d_{GH}(K,\gamma(n))
  \leq 2\eps.
 \]
 Thus $K$ is a tangent space of $\Sigma$ at $\omega$. Thus we complete the proof by the arbitrary choice of $K$ and $\omega$.
\end{proof}

\begin{remark}
 In order to tangent out whole $\MC$ from $\Sigma$, instead of just $\MC(1)$, we could modify the definition of a tangent space by allowing a finite scaling of the metric at each step. This is similar to the definition of a tangent measure. On the other hand, we could concentrate on the similarity classes of metric spaces, meaning that two spaces are equivalent if they are the same up to isometry and scaling of the metric with a constant.
\end{remark}

In the next theorem, we show that a typical compact metric space is locally rich. It is interesting that this somewhat weird and irregular property is, in terms of category, very common, but this seems to be the nature of (Baire) typical objects.

\begin{theorem}
\label{metrictypical}
A typical point of $(\MC(1),d_{GH})$ is a locally rich space.
\end{theorem}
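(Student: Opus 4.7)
The plan is to use a standard Baire category argument: to exhibit the locally rich spaces as a countable intersection of dense open sets in $\MC(1)$. Since $\Tan(X,x)$ is closed in $\MC(1)$ and $(\gamma(m))_{m \in \N}$ is dense in $\MC(1)$ by Proposition \ref{metricprop}, the space $X$ is locally rich precisely when $\gamma(m) \in \Tan(X,x)$ for every $m \in \N$ and every $x \in X$. Unwinding the definition of a tangent space, this is the assertion that for every $m, k \in \N$ and every $x \in X$ there exists $r \in (0, 1/k)$ with $d_{GH}(T_{x,r}(X), \gamma(m)) < 1/k$. Accordingly I set
\[
 V_{m,k} = \bigl\{X \in \MC(1) : \forall x \in X,\ \exists r \in (0, 1/k) \text{ with } d_{GH}(T_{x,r}(X), \gamma(m)) < 1/k\bigr\},
\]
so that the set of locally rich spaces equals $\bigcap_{m,k \in \N} V_{m,k}$, and the theorem reduces to proving that each $V_{m,k}$ is open and dense in $\MC(1)$.

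First I show that $V_{m,k}$ is dense by observing that the locally rich spaces themselves are dense (and lie in every $V_{m,k}$). Given $Y \in \MC(1)$ and $\eps > 0$, I pick a finite space with rational distances $\gamma'$ with $d_{GH}(\gamma', Y) < \eps/2$, and then re-run the construction of Theorem \ref{metricmain} with a modified dense sequence whose first term is $\gamma(1) := \gamma'$ and whose remaining terms are the original dense sequence, choosing $r_2 < \min\{\eps/2, \delta_1\}$ and all subsequent $r_n$ as before. Theorem \ref{metricmain} applies verbatim to the new sequence and produces a locally rich space $\Sigma'$, while Lemma \ref{inclusion} applied at $n = 1$ (so $T_{\omega,1}(\Sigma') = \Sigma'$) gives $d_{GH}(\Sigma', \gamma') \leq r_2$, whence $d_{GH}(\Sigma', Y) < \eps$.

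Next I show that $V_{m,k}^c$ is closed. Suppose $X_j \to X$ in $d_{GH}$ with $X_j \in V_{m,k}^c$ witnessed by $x_j \in X_j$. Realizing the convergence via compatible isometric embeddings into $l^\infty$ (or equivalently via a sequence of $\eps$-correspondences), I may, after passing to a subsequence, assume $x_j \to x$ for some $x \in X$. The claim is that $x$ witnesses $X \in V_{m,k}^c$. Fix $r \in (0, 1/k)$. For all but countably many such $r$, the closed balls satisfy $B_{X_j}(x_j, r) \to B_X(x, r)$ in the ambient Hausdorff distance, and hence $T_{x_j,r}(X_j) \to T_{x,r}(X)$ in $d_{GH}$; passing to the limit in $d_{GH}(T_{x_j,r}(X_j), \gamma(m)) \geq 1/k$ yields the required inequality. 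For the exceptional $r$ I approximate from above by good $r' \in (r, 1/k)$ and appeal to the right-continuity of $r' \mapsto T_{x,r'}(X)$ in $d_{GH}$, which is an easy consequence of the right-continuity of $r' \mapsto B_X(x, r')$ in Hausdorff distance on any compact metric space.

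The main technical hurdle is the ball convergence $B_{X_j}(x_j, r) \to B_X(x, r)$. The upper Hausdorff bound is straightforward, but the lower bound can fail at points $y \in X$ with $d(y,x) = r$ that admit no approximation from strictly inside the ball; this forces some $y_j \to y$ with $d(y_j, x_j)$ slightly larger than $r$. This pathology is confined to the at most countable set of $r$ at which the nondecreasing set-valued map $r' \mapsto B_X^{\circ}(x, r')$ has a strict jump, so choosing $r$ outside this exceptional set and handling the rest by right-continuity closes the argument.
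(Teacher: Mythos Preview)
Your argument is correct and follows a genuinely different route from the paper's. The paper does not define the sets $V_{m,k}$ abstractly and then prove they are open; instead it builds explicit ``two-scale'' test spaces $\gamma(n,k)=\gamma(n)\times\gamma(k)$ (which look like $\gamma(n)$ at unit scale and like $\gamma(k)$ at scale $r_{n+1}$), takes small $d_{GH}$-balls $\GG(n,k)$ around them, and checks by a direct geometric estimate (using an ``empty annulus'' around scale $r_{n+1}$) that every $X\in\GG(n,k)$ has $T_{x,s_n}(X)$ close to $\gamma(k)$ at \emph{every} $x\in X$ for a specific radius $s_n$. Density of $\bigcup_{n\ge N}\GG(n,k)$ is then immediate from density of $\{\gamma(n)\}$, and no limiting argument about ball convergence is needed.

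Your approach trades that explicit construction for two softer ingredients. For density you recycle Theorem \ref{metricmain}: re-running the $\Sigma$ construction with a prescribed first layer shows locally rich spaces are dense, which is elegant and makes Theorem \ref{metrictypical} a clean corollary of Theorem \ref{metricmain}. For openness you show $V_{m,k}^c$ is closed via stability of $T_{x,r}$ under GH limits. The delicate point here---that $B_{X_j}(x_j,r)\to B_X(x,r)$ can fail when the sphere $\{d(\cdot,x)=r\}$ is not contained in $\overline{B_X^\circ(x,r)}$---you handle correctly: such radii are countable (for each $n$ the set of $r$ with a point $y_r$ at distance $\ge 1/n$ from $B_X^\circ(x,r)$ is finite by compactness, since the $y_r$ are $1/n$-separated), and the remaining radii are recovered by the right-continuity of $r'\mapsto B_X(x,r')$, which indeed holds on any compact space. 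The one place to tighten is the phrase ``realizing the convergence via compatible isometric embeddings'': what you actually need is a sequence of $\eps_j$-correspondences $R_j\subset X_j\times X$, from which one extracts $\tilde x_j\in X$ related to $x_j$, passes to a subsequential limit $x$, and then restricts/adjusts $R_j$ to a correspondence between $B_{X_j}(x_j,r)$ and $B_X(x,r)$ with distortion $\le 2\eps_j+2d_H(B_X(x,r\pm\delta_j),B_X(x,r))$. This is exactly what your sketch describes, but writing it via correspondences avoids the (nontrivial) step of simultaneously embedding all $X_j$ and $X$ into one $l^\infty$.

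In short: both proofs are Baire-category arguments, but the paper's is \emph{constructive} (explicit dense open sets, no continuity lemma) while yours is \emph{structural} (density from Theorem \ref{metricmain}, openness from a general ball-stability lemma). Each is complete; yours is more modular, the paper's is more self-contained.
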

Before the proof we remark a few elementary properties of $d_H$. Let $A,B\subset l^\infty$ be compact and $a,b>0$. Then for all $x\in A$ and $y\in B$ we have $\| ax-by \| \leq a\|x-y\|+|a-b|\|y\|$, which implies that if $d_H(A,B)\leq \eps$ then
\begin{equation}
 \label{lobservation}
 d_H(aA,bB)\leq a\eps+|a-b|\max_{y\in B}\|y\|.
\end{equation}
Here $aA=\{ax:a\in A\}$. Also, note that if $i\colon X\to l^\infty$ is an isometric embedding, then for any $x\in X$ and $r>0$ we have that $i|_{B(x,r)}\colon B(x,r) \to i(B(x,r))$ is an isometry and so $i_{r}\colon T_{x,r}(X)\to r^{-1}i(B(x,r))$ defined by $i_{r}(y)=r^{-1}i(y)$ is an isometry.

\begin{proof}[Proof of Theorem \ref{metrictypical}]
``Similarly'' as before, we set $\gamma(n,k)=\gamma(n)\times\gamma(k)$ and endow this space with a metric
\[
 d_{n,k}(\alpha,\beta)=\max\{d_{\gamma(n)}(\alpha(1),\beta(1)),r_{n+1} d_{\gamma(k)}(\alpha(2),\beta(2))\},
\]
for $\alpha=(\alpha(1),\alpha(2))$ and $\beta=(\beta(1),\beta(2))\in \gamma(n,k)$. Recall that $r_{n+1}<\delta_n$.

Let $p\in \gamma(k)$,  then $\gamma(n)\times \{p\}$ with the metric $d_{n,k}$ is isometric to the metric space $(\gamma(n), d_{\gamma_n}).$ Let $x=(x_1,x_2) \in \gamma(n,k)$, then $d_{n,k}((x_1, x_2), (x_1,p))\leq r_{n+1}$. Thus we have 
\begin{equation}\label{2}
d_{GH}( \gamma(n,k), \gamma(n)\times \{p\}) \leq r_{n+1}.
\end{equation}
Since $\{\gamma(n)\}^{\infty}_{n=1}$ is a dense in $\MC(1)$, the estimate \eqref{2} gives that also $\{\gamma(n,k)\}^{\infty}_{n=1}$ is dense in $\MC(1)$. Note that the above argument holds for every $k\in \N$. Furthermore if $k \in \N$ and $N\in \N$, then the set $\{\gamma(n,k)\}^{\infty}_{n=N}$ is dense in $\MC(1)$. 

Fix a sequence $\eps_n\searrow 0$ so that $r_{n+1}+8 \eps_n<\delta_n$ and denote
\begin{equation}
\mathcal{G}(n,k):= U_{d_{GH}}(\gamma(n,k), r_{n+1}\epsilon_n),
\end{equation}
an open ball in $\MC(1)$ with center $\gamma(n,k)$ and radius $r_{n+1}\epsilon_n$. As a union of open balls, $\bigcup^{\infty}_{n=N} \mathcal{G}(n,k)$ is open for any $N\in\N$ and it is dense for any $N\in\N$, since $\{\gamma(n,k)\}^{\infty}_{n=N}$ is dense for any $N\in\N$. 

Fix $X\in\GG(k):= \bigcap^{\infty}_{N=1}\bigcup^{\infty}_{n=N} \mathcal{G}(n,k)$ and $x\in X$. Since $X\in\GG(k)$, there are infinitely many $n\in\N$ so that $d_{GH}(\gamma(n,k),X)<r_{n+1}\eps_n$. Thus we find isometries $i$ and $j$ so that $d_H(i(X),j(\gamma(n,k)))<r_{n+1}\eps_n$. This implies that for every $y\in X$ there exists $z\in \gamma(n,k)$ so that $d_{l^\infty}(i(y),j(z))<r_{n+1}\eps_n$ and vice versa. Especially, we find $z$ so that $d_{l^\infty}(i(x),j(z))<r_{n+1}\eps_n$. Fix $s_n=r_{n+1}+2r_{n+1} \eps_n$. Next we argue that
\begin{equation}
 \label{epsr}
 d_{H}\left( i(B(x,s_n)) , j(B(z,r_{n+1})) \right)\leq r_{n+1}\eps_n.
\end{equation}
Recall that $\delta_n>r_{n+1}+8r_{n+1}\eps_n$. By this and the triangle inequality we see that
\[
 (i(X)\cup j(\gamma(n,k))) \cap ( B(i(x),r_{n+1}+4r_{n+1}\eps_n)\setminus B(i(x),s_n) )=\emptyset.
\]
Here the balls $B(i(x),r)$ are (closed) balls of $l^\infty$. Since this annulus is empty, any point of $j(\gamma(n,k)))$ ``near'' to any point of $i(B(x,s_n))$ must lie inside $B(i(x),s_n)$. This combined with the fact that $d_H(i(X),j(\gamma(n,k)))<r_{n+1}\eps_n$ proves \eqref{epsr}.

Without loss of generality we may assume that $0\in j(B(z,r_{n+1}))$. By observation \eqref{lobservation} and equation \eqref{epsr}, we now have that
\begin{align*}
 d_{GH}( T_{x,s_n}(X) , T_{z,r_{n+1}}(\gamma(n,k)) )
 &\leq d_{H}( i_{ s_n }(B(x,s_n)) , j_{ r_{n+1} }(B(z,r_{n+1}) )\\
 &= d_{H}( s_n^{-1} i(B(x,s_n)) , r_{n+1}^{-1} j(B(z,r_{n+1})) )\\
 &\leq \frac{r_{n+1}\eps_n}{s_n} + ( \frac{1}{r_{n+1}} - \frac{1}{s_n} ) \diam j(B(z,r_{n+1}))  \\
 &\leq \frac{\eps_n}{1+2\eps_n} + \frac{2\eps_n}{r_{n+1}(1+2\eps_n)} 2r_{n+1},
\end{align*}
which converges to $0$ as $n\to \infty$ since $\eps_n \to 0$ as $n\to \infty$. It remains to be proved that $T_{z,r_{n+1}}(\gamma(n,k))$ is isometric to $\gamma(k)$, but this follows by a similar argument as the proof of Lemma 2.1.

We have obtained that $\gamma(k)\in\Tan(X,x)$ for all $X\in\GG(k)$ and $x\in X$. Thus from every element of $\bigcap^{\infty}_{k=1} \GG(k)$ one can tangent out any $\gamma(k)$ at all points. As we observed in the introduction, $\Tan(X,x)$ is closed, so any $X\in\bigcap^{\infty}_{k=1} \mathcal{G}(k)$ is locally rich.

In the end, we notice that the complement of $\bigcap^{\infty}_{k=1} \mathcal{G}(k)$ is of first category since $\mathcal{G}(n,k)$ are open and dense sets in a complete metric space $\MC(1)$.
\end{proof}

\subsection{Properties of $\Sigma$}
A metric space $X$ is called \emph{locally doubling} if there exists a constant $N(x)\in\N$, so that every ball $B(x,r)\subset X$ can be covered by at most $N(x)$ balls of radius $r/2$, where $N(x)$ does not depend on $r$. If one can choose a uniform $N$ for all $x\in X$, then $X$ is called $\emph{doubling}$. As one might guess, $(\Sigma,d_{\Sigma})$ is not locally doubling anywhere. Intuitively the reason is that in small scales, $\Sigma$ contains approximations of spaces with larger and larger doubling constants.

\begin{remark}
 \label{doublingremark}
 A locally rich space is not locally doubling anywhere, and thus has Assouad dimension equal to $\infty$.
\end{remark}
\begin{proof}
 The latter claim follows by from the first one, see \cite[Proposition 1.15]{Heinonen2003}. Assume on the contrary that a locally rich space $(X,d_X)$ is locally doubling at $x$ with constant $N$. Let $K$ be a metric space of $N + 1$ elements with all distances equal to $1$. Since $K\in\Tan(X,x)$, we find sequence $t_n\searrow 0$, for which $\# T_{x,t_n}(X)\geq N+1$ and a set $A\subset T_{x,t_n}(X)$ with $\# A = N+1$ and $d_{T_{x,t_n}(X)}(x,y)>3/4$ for all different $x,y\in A$. Therefore, it takes at least $N+1$ balls of radius $2^{-1} t_n$ to cover the ball $B(x,t_n)$.
\end{proof}

\begin{remark}
\label{minkowskiremark}
 From Remark \ref{doublingremark} one can see that $\dima\Sigma=\infty$ regardless of the choice of the sequence $(r_n)$ in the construction. It turns out that the other dimensions depend heavily on $(r_n)$. For example, if $r_n\leq (\#\gamma(n))^{-n}$, we have that
 \begin{align*}
  \frac{\log N(\Sigma,\rr(n))}{-\log \rr(n)}
  &\leq \frac{\log \prod_{i=1}^{n}\#\gamma(i)}{\log\prod_{i=1}^{n}(\#\gamma(i))^i}
  \to 0,
 \end{align*}
where  $N(\Sigma,\rr(n))$ denotes the minimal number of balls with radius $\rr(n)$ needed to cover $\Sigma$. This  implies that $\adimm \Sigma=0$.
 
We do not know how to estimate the Hausdorff dimension of $\Sigma$ from below. One way could be to consider the lower local dimension of a Borel measure on $\Sigma$. For definitions and properties of local dimensions of measures, see \cite{Falconer1997}. Let $\mu$ be the unique measure satisfying $\mu[\omega|_n]=\prod_{i=1}^n p_i(\omega(i))$ where $p_i(x)$ are positive real numbers for each $x \in \gamma(i)$
satisfying $\sum_{x \in \gamma(i)}p_i(x)=1$ for all $i \in \N$. Choosing $\rr(n+1)<r \leq \rr(n)$ gives that
 \begin{align*}
  \frac{\log \mu B(\omega,r)}{\log r}
  &\geq \frac{\log \mu B(\omega,\rr(n))}{\log \rr(n+1)}
   = \frac{\log \mu [\omega|_n]}{\log \rr(n+1)}
   = \frac{\log \prod_{i=1}^n p_i(\omega(i))}{\log \rr(n+1)}.
 \end{align*}
Choosing the weights $p_i(x)$ and the sequence $(r_n)$ fixes the estimate for the lower local dimension of $\mu$ for all $\omega$ and therefore also for $\dimh \Sigma$. However, it is not clear how to choose $(r_n)$ large enough. The conditions are that $r_n\searrow 0$ and $r_n < \delta_{n-1}$, the minimum of distances between points of $\gamma(n-1)$ and it takes a lot more detailed construction to get a grip of $\delta_n$. On the other hand in Theorem \ref{mattilacorollary}, we show that a set $E$ in Euclidean space, might have positive Hausdorff dimension, even if one can tangent out $\{0\}$ from each $x\in E$. Therefore we conjecture that a locally rich space can have positive Hausdorff dimension.
\end{remark}

\section{Locally rich sets in Euclidean spaces}\label{Euclidean}
In this section, we show that there exists locally rich sets in $Q$. Theorem \ref{allpoints} gives an example of such a set, and in Theorem \ref{Baire} we show that this is a typical property of compact sets. The construction is of course similar to the construction of the locally rich metric space, but the geometry of $\R^d$ and the definition of $d_H$ give some technical differences. For example, there needs to be enough space between the construction pieces. Also, in any compact set $K$, there are always points $x$, for which $T_{x,r}(K)$ is at least half empty for all $r>0$.

Let us fix the notations
\begin{align*}
 \KK_0 &=\{K\subset Q: K \text{ compact and }0\in K\}\\
 \mathcal{S}&=\{\bigcup_{i=0}^{N-1}x_i:
 {x_i}\in \Q^d\cap (-1,1)^d,x_0=0,N<\infty\}.
\end{align*}
Clearly $\mathcal{S}$ is countable and dense in $\KK_0$. Choose a sequence $(\gamma_i)_{i=1}^{\infty} \subset \mathcal{S}$, so that each element of $\mathcal{S}$ occurs infinitely many times. Note that each $\gamma_i$ is finite. Let 
\begin{equation}\label{delta}
\delta_n=\min\{|x-y|: x\in\gamma_n, y \in \gamma_n \cup \partial Q ~~\text{and} ~~x\neq y\}.  
\end{equation}
Fix a sequence $(r_n)$ with $r_1=1$, $r_i\searrow 0$ and let $\rr(n)=\prod_{i=1}^n r_i$.

\begin{theorem}
\label{Oneiltheorem}
There exists a compact set $X$ satisfying $\Tan(X,x)=\KK_0$ for all $x$ in a dense subset of $X$.
\end{theorem}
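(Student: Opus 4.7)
The plan is to transplant the metric-space construction of Section~\ref{locricmet} into $\R^d$ via an explicit additive embedding. Setting $\Sigma=\prod_{n\geq 1}\gamma_n$ with the product topology, I would define
\[
\pi\colon\Sigma\to Q,\qquad \pi(\omega)=\sum_{n=1}^\infty \rr(n)\,\omega_n,
\]
and then take $X:=\pi(\Sigma)$. Choosing $(r_n)$ to decay fast enough, say $r_{n+1}\leq c\,\delta_n$ for a small dimensional constant $c>0$ (with $r_n\leq\tfrac12$ eventually), guarantees that the series converges with $\pi(\omega)\in Q$, that $\pi$ is continuous and injective on the product topology, and hence that $X$ is a compact subset of $Q$. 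The inclusion of the distance to $\partial Q$ in the definition~\eqref{delta} of $\delta_n$ is exactly what allows the rescaled copies of $\gamma_{n+1}$ to be ``attached'' around each level-$n$ point without spilling outside $Q$ at any stage.

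For the dense subset I would take $D:=\{\pi(\omega):\omega_n=0\text{ for all but finitely many }n\}$, whose density in $X$ follows from the truncation $\pi(\omega_1,\ldots,\omega_N,0,0,\ldots)\to\pi(\omega)$ as $N\to\infty$. The heart of the argument is then the estimate that if $x=\pi(\omega)\in D$ with $\omega_n=0$ for all $n>N$, then
\[
d_H\bigl(T_{x,\rr(n)}(X),\gamma_n\bigr)\leq \epsilon_n \quad\text{for all }n>N,
\]
with $\epsilon_n\searrow 0$ controlled by the tail $\sum_{k>n}\rr(k)/\rr(n)$. I would prove this by splitting $\Sigma$ at the cylinder $[\omega|_{n-1}]$. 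Inside the cylinder, the expansion
\[
(\pi(\omega')-\pi(\omega))/\rr(n)=\omega'_n+\sum_{k>n}(\rr(k)/\rr(n))(\omega'_k-\omega_k)
\]
has leading term $\omega'_n\in\gamma_n$ and tail of norm at most $\epsilon_n$, and the $\delta_n$-separation from $\partial Q$ keeps these perturbations inside $Q$; conversely, taking $\omega'_n$ to range over all of $\gamma_n$ with $\omega'_k=0$ for $k>n$ shows that $\gamma_n$ itself lies exactly in $(X-x)/\rr(n)$. Outside the cylinder, if $i_0<n$ is the first disagreement, the dominant contribution $\rr(i_0)(\omega'_{i_0}-\omega_{i_0})$ has norm at least $\rr(i_0)\delta_{i_0}$, and dividing by $\rr(n)$ and using $r_{i_0+1}\cdots r_n\ll \delta_{i_0}$ pushes the quotient outside $Q$, so such $\omega'$ contribute nothing to $T_{x,\rr(n)}(X)$.

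Granting the estimate, the conclusion is routine. Since each element of $\mathcal{S}$ appears infinitely often in $(\gamma_n)$ and $\mathcal{S}$ is dense in $\KK_0$, for any $K\in\KK_0$ one can select a subsequence $n_k>N$ with $d_H(\gamma_{n_k},K)\to 0$, which yields $K\in\Tan(X,x)$ for every $x\in D$; the reverse inclusion is automatic because $0\in T_{x,r}(X)\subset Q$ for every $r>0$. The main technical obstacle I foresee is the simultaneous calibration of $(r_n)$: it must decay fast enough relative to $\delta_n$ both to prevent overlap of embedded copies and to eject sequences from different cylinders out of $Q$ after zooming, yet summably so that the Hausdorff tail $\epsilon_n$ is well-defined and tends to zero while $X$ remains in $Q$. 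A choice such as $r_{n+1}\leq c\,\delta_n\cdot 2^{-n}$ should provide ample slack on both sides.
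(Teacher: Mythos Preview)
Your construction $X=\pi(\Sigma)$ with $\pi(\omega)=\sum_n\rr(n)\omega_n$ is exactly the paper's, and your zooming estimate at $x=\pi(\omega)$ with $\omega_n=0$ for $n>N$ is essentially the content of the paper's Lemma~\ref{Oneillemma} specialised to such points; the separation bound you invoke to eject out-of-cylinder points from $Q$ is precisely \eqref{piinjection}, which follows from the condition $8\sqrt{d}\,r_{n+1}\le\delta_n$ (your ``$r_{n+1}\le c\,\delta_n$ for a small dimensional constant''). So the analytic core of your argument matches the paper.

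Where you genuinely differ is in the choice of the dense subset. The paper fixes a Bernoulli measure $\mu$ on $\Sigma$, sets $V_E=\{\ii:\ii(e(n))=0\text{ infinitely often}\}$ for a subsequence with $\gamma_{e(n)}=E$, and uses a Borel--Cantelli argument to get $\mu(\bigcap_{E\in\mathcal{S}}V_E)=1$; density then comes from full measure. You instead take the countable set $D$ of eventually-zero addresses and prove density directly by truncation, which is more elementary and avoids measure theory altogether. Your approach also yields a slightly sharper picture at each point of $D$: once $n>N$, \emph{every} $\gamma_n$ (not just those along a subsequence where the $n$-th coordinate happens to vanish) is an $\epsilon_n$-approximate tangent, so the appeal to ``each element of $\mathcal{S}$ occurs infinitely often'' is entirely transparent. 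The paper's route, on the other hand, produces a set of full $\pi_*\mu$-measure rather than merely a countable dense set, which is a stronger conclusion if one cares about the size of the good set (and is closer in spirit to O'Neil's measure result that motivated the theorem). Either route proves the statement as written.
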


Notice that  for any compact set $E \in \KK$, we have  $\Tan(E,x)\neq\KK_0$ at the ``boundary'' point of $E$. Here the ``boundary''' of $E$ is the set $E\cap \partial Q'$ where $Q'$ is the smallest cube that contain $E$ and $\partial Q'$ is the boundary of $Q'$. Since every element of $\Tan(X,x)$ contains the original point zero, the key in obtaining Theorem \ref{allpoints} is that we do not try to get  $\Tan(X,x)=\KK$ but only $\Tan(X,x)\approx\KK$.

\begin{theorem}
\label{allpoints}
 There exists a compact set $X$ satisfying $\Tan(X,x)\approx\KK$ for all $x\in X$.
\end{theorem}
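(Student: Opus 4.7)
The plan is to construct $X$ via a recursive tree-like scheme analogous to Theorems \ref{metricmain} and \ref{Oneiltheorem}, and then to exploit the freedom built into the relation $\approx$ (allowing translation by a point of $K$ and scaling by $\lambda$ in a fixed interval) to promote the tangent property from a dense subset to every point of $X$. Begin with $X_0=Q$ and recursively define $X_n$ from $X_{n-1}$ by replacing each cube $C$ of $X_{n-1}$ (of diameter comparable to $\rr(n-1)$) with the union of $\#\gamma_n$ sub-cubes of diameter comparable to $\rr(n)$, whose centers are placed inside $C$ at the positions prescribed by $\gamma_n$ after rescaling to $C$. The definition of $\delta_n$ in \eqref{delta}, which includes the distance from $\gamma_n$ to $\partial Q$, together with $r_n<\delta_{n-1}$, guarantees that the sub-cubes fit strictly inside $C$ and are pairwise disjoint. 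Set $X=\bigcap_n X_n$; it is nonempty and compact.

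Every $x\in X$ determines a sequence $(\omega_n)\in\prod_n\gamma_n$, where $\omega_n$ labels the sub-cube of $X_n$ containing $x$. A direct computation shows that at an appropriate scale $t_n$ comparable to $\rr(n-1)$, the zoom $T_{x,t_n}(X)$ approximates $\gamma_n-\omega_n$ with error of order $r_n$ in $d_H$; the error absorbs both the residual size of stage-$n$ sub-cubes and the displacement of $x$ from the exact center of its stage-$n$ sub-cube. Since $(\gamma_n)$ visits each $\gamma\in\mathcal{S}$ infinitely often and, on that subsequence, $\omega_n$ takes only the finitely many values in $\gamma$, a pigeonhole argument on the finite set $\gamma$ produces, for every $\gamma\in\mathcal{S}$, some $q_\gamma=q_\gamma(x)\in\gamma$ with $\gamma-q_\gamma\in\Tan(X,x)$ (using closedness of $\Tan(X,x)$ and $r_n\searrow 0$).

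To prove $\KK\lesssim\Tan(X,x)$, fix $K\in\KK$, pick any $y_0\in K$, and set $\lambda=1/2$ so that $\lambda(K-y_0)\in\KK_0$ (it contains $0$ and fits in $Q$). Approximate $\lambda(K-y_0)$ in $d_H$ by a sequence $\gamma^{(k)}\in\mathcal{S}$. Writing $q^{(k)}:=q_{\gamma^{(k)}}(x)\in\gamma^{(k)}$, we have $\gamma^{(k)}-q^{(k)}\in\Tan(X,x)$ for every $k$. By compactness of $Q$ pass to a subsequence so that $q^{(k_j)}\to c$; since $q^{(k_j)}\in\gamma^{(k_j)}\hdarrow\lambda(K-y_0)$, the limit satisfies $c\in\lambda(K-y_0)$, hence $c=\lambda(z-y_0)$ for some $z\in K$. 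Therefore
\[
\gamma^{(k_j)}-q^{(k_j)}\hdarrow\lambda(K-y_0)-c=\lambda(K-z),
\]
and closedness of $\Tan(X,x)$ yields $\lambda(K-z)\in\Tan(X,x)$ with $z\in K$. This establishes $\KK\lesssim\Tan(X,x)$ with the uniform constant $\lambda_0=1/4$. The reverse inequality $\Tan(X,x)\lesssim\KK$ is immediate, as $\Tan(X,x)\subset\KK$.

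The main obstacle will be the geometric bookkeeping: verifying that the zoom $T_{x,t_n}(X)$ really does approximate $\gamma_n-\omega_n$ within $O(r_n)$ for an arbitrary $x\in X$—not just one sitting at the exact center of every sub-cube—and choosing $t_n$ so that the intersection with $Q$ in the definition of $T_{x,t_n}$ does not clip away the relevant sub-cube pattern. Including $\partial Q$ in the definition of $\delta_n$ is precisely what provides the buffer needed for this estimate to hold uniformly in $x$.
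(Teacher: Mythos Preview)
Your strategy matches the paper's: build $X=\pi\Sigma$ as in Theorem~\ref{Oneiltheorem}, then at an arbitrary $x\in X$ control the translation that appears when zooming away from the cube centre by passing to a convergent subsequence. The paper phrases this as extracting a limit of $x_n:=T_{\pi\jj_{k(n)},\rr(k(n))}(x)\in Q$ via compactness of $Q$; you phrase it as pigeonholing on the finitely many values $\omega_n\in\gamma$. These are equivalent manoeuvres.

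There is, however, a genuine error in the key estimate. The set $\gamma_n-\omega_n$ need not lie in $Q$ (take $d=1$, $\gamma_n=\{-0.9,0,0.9\}$, $\omega_n=0.9$; then $-1.8\in\gamma_n-\omega_n$), so it cannot be a $d_H$-limit of $T_{x,t_n}(X)\subset Q$. Contrary to your final paragraph, the $\partial Q$ term in $\delta_n$ does not prevent this: that buffer only ensures the level-$n$ sub-cubes fit inside their parent cube, it says nothing about $\gamma_n-\omega_n$ remaining in $Q$. The paper's fix (cf.\ Figure~\ref{zoompicture}) is to zoom at the \emph{doubled} scale $2\rr(k(n))$: the enlarged window now contains the entire pattern, and one sees $\tfrac12(\gamma_n-\omega_n)+O(r_{n+1})$, which \emph{does} lie in $Q$. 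For this one also needs the stronger separation~\eqref{ddistance}, not merely the $r_n<\delta_{n-1}$ you invoke, to guarantee that the doubled window meets no neighbouring level-$(n-1)$ cube. With that correction your pigeonhole-plus-limit argument runs unchanged, producing $\tfrac12(\gamma-q_\gamma)\in\Tan(X,x)$ and ultimately a set of the form $\tfrac14(K-z)\in\Tan(X,x)$ (so your initial scaling by $\lambda=1/2$ is harmless but unnecessary).
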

Theorem \ref{Oneiltheorem} is similar to \cite[Theorem 3]{ONeil1995}, but does not follow directly from that, since generally the convergence of measures does not imply the convergence of the supports of the measures. Not even in the case when the supports are finite, see Example \ref{Toby}. We may nevertheless use the construction of \cite{ONeil1995}, we just need to check the convergence of compact sets instead of the weak convergence of measures. Note that the same construction is used to prove Theorems  \ref{Oneiltheorem} and \ref{allpoints}. It is rather surprising that this tangent property holds for all points, since it not possible for measures, see \cite[Proposition 5.1]{Sahlsten2014}. 

Let us now construct the compact set in question in Theorems \ref{Oneiltheorem} and \ref{allpoints}.  Unlike in the construction of a locally rich metric space, we require that 
\begin{equation}\label{ddistance}
8\sqrt{d}r_{n+1}\leq \delta_n,
\end{equation}
to guarantee that we have enough empty space in between the screens that we are focusing on. Here $d$ is the dimension of the ambient space. When $\ii\in\Sigma=\prod_{i=1}^\infty \gamma_i$, let $\ii(n)$ denote the $n$:th coordinate of $\ii$. Define the projection $\pi\colon\Sigma \to Q$ by setting 
\[
\{\pi\ii\}= \bigcap_{n=1}^{\infty}\left( \rr(n+1)Q+\sum^{n}_{k=1} \rr(k)\ii(k) \right).
\]
We get that $\pi$ is well defined because the cubes are nested, meaning that $\rr(n+1)Q+\sum_{k=1}^n \rr(k)\ii(k) \subset \rr(n)Q+\sum_{k=1}^{n-1} \rr(k)\ii(k)$: If $y\in \rr(n+1)Q+\sum_{k=1}^n \rr(k)\ii(k)$, then
\begin{align*}
 \|\sum_{k=1}^{n-1} \rr(k)\ii(k)-y\|_{\max}
 &\leq \|\sum_{k=1}^{n-1} \rr(k)\ii(k)-\sum_{k=1}^{n} \rr(k)\ii(k)\|_{\max} \\
 &\ \ \ + \|\sum_{k=1}^{n} \rr(k)\ii(k)-y\|_{\max}\\
 &=\|\rr(n)\ii(n)\|_{\max}+\rr(n+1)\\
 &\leq \rr(n)(1-\delta_n)+\rr(n+1)\\
 &\leq \rr(n)(1-r_{n+1})+\rr(n+1)\\
 &\leq \rr(n),
\end{align*}
which proves that $\rr(n+1)Q+\sum_{k=1}^n \rr(k)\ii(k) \subset \rr(n)Q+\sum_{k=1}^{n-1} \rr(k)\ii(k)$. We also have that $\pi$ is a continuous injection: Let $\ii,\jj\in\Sigma$ and $\ii\neq\jj$. Write $n_0=\min\{n:\ii(n)\neq\jj(n)\}$. From the previous calculation, it follows that
\begin{equation}
 \label{picontinuity}
 \|\pi\ii-\pi\jj\|_{\max} \leq 2\rr(n_0-1),
\end{equation}
which gives the continuity (with respect to the standard tree metric on $\Sigma$). Also, note that
\[
 \sum_{k=n_0}^\infty \rr(k) \leq \rr(n_0)\sum_{k=0}^\infty r_{n+1}^k
 = \rr(n_0 ) \frac{1}{1-r_{n+1}} \leq 2 \rr(n_0),
\]
when $r_{n+1}<1/2$, which is true for all $n$ by \eqref{ddistance}. Using this geometric argument, equation \eqref{ddistance} and the reverse triangle inequality, we get
\begin{equation}
 \label{piinjection}
 \begin{split}
  |\pi\ii-\pi\jj|
 &= |\sum_{k=n_0}^\infty \rr(k)( \ii(k)-\jj(k) )| \\
 &\geq |\rr(n_0)\left( \ii(n_0)-\jj(n_0) \right)|-|2\sqrt{d}\sum_{k=n_0 +1}^\infty \rr(k)|\\
 &\geq \rr(n_0)\delta_{n_0}-2\sqrt{d}2 \rr(n_0 +1)\\
 &= \rr(n_0)(8\sqrt{d}r_{n+1}-4\sqrt{d} r_{n+1})\\
 &= \rr(n_0 +1)4\sqrt{d},
 \end{split}
\end{equation}
which ensures the injectivity. Furthermore, $|\pi\ii-\pi\jj|>2\sqrt{d}\rr(n+1)$, which is needed in the proof of Theorem 3.2.

Finally, let $\mu$ be a Bernoulli measure on $\Sigma$ (e.g. $\mu[\ii]=\prod_{n=1}^{|\ii|}(\#\gamma(n))^{-1}$). For each $E\in \mathcal{S}$ fix a sequence $e(n)$ so that $\gamma_{e(n)}=E$ and define
\[
 V_E=\{\ii\in\Sigma: \ii(e(n))=0 \text{ infinitely often}\}.
\]
A Borel-Cantelli type argument \cite[Lemma 4]{ONeil1995} shows that $\mu(\bigcap_{E\in\mathcal{S}}V_E)=1$. The set $\pi\Sigma$ is compact since $\Sigma$ is compact and $\pi$ is continuous.
\begin{lemma}
\label{Oneillemma}
 If $\ii\in\bigcap_{E\in\mathcal{S}}V_E$, then $\mathcal{S}\subset \Tan(\pi\Sigma,\pi\ii)$.
\end{lemma}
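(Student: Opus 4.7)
Fix $E\in\mathcal S$. Since $\ii\in V_E$, the set $N_E=\{n\in\N : \gamma_n=E \text{ and } \ii(n)=0\}$ is infinite. The plan is to show that $T_{\pi\ii,\rr(n)}(\pi\Sigma)\hdarrow E$ along $n\in N_E$, which gives $E\in\Tan(\pi\Sigma,\pi\ii)$; by the arbitrariness of $E$ this proves the lemma. The strategy mirrors Lemma~\ref{inclusion}: identify which portion of $\pi\Sigma$ is visible after the zoom, compute the zoomed image explicitly on a single cylinder, and bound the error by a geometric series. The one genuinely new issue compared to the metric case is that we need the separation estimate \eqref{piinjection} to rule out contributions from ``sibling'' cylinders in the Euclidean picture.

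First I would show that only the cylinder $[\ii|_{n-1}]$ survives the zoom. If $\jj\in\Sigma$ satisfies $\hat T_{\pi\ii,\rr(n)}(\pi\jj)\in Q$, then $\|\pi\jj-\pi\ii\|_\infty\le\rr(n)$, so $|\pi\jj-\pi\ii|\le\sqrt d\,\rr(n)$. Setting $n_0=|\ii\wedge\jj|+1$ (the first coordinate in which the two symbols differ), the estimate \eqref{piinjection} gives $4\sqrt d\,\rr(n_0+1)\le\sqrt d\,\rr(n)$; since $\rr$ is strictly decreasing this forces $n_0\ge n$, i.e.\ $\jj|_{n-1}=\ii|_{n-1}$.

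Next, for $\jj\in[\ii|_{n-1}]$, the hypothesis $\ii(n)=0$ yields
\[
\hat T_{\pi\ii,\rr(n)}(\pi\jj)\;=\;\jj(n)\;+\;\sum_{k=n+1}^{\infty}\frac{\rr(k)}{\rr(n)}\bigl(\jj(k)-\ii(k)\bigr).
\]
Since $\rr(k)/\rr(n)=r_{n+1}r_{n+2}\cdots r_k$ and \eqref{ddistance} forces $r_{n+2}<1/2$, the tail sum is controlled by a geometric series: $\sum_{k=n+1}^\infty \rr(k)/\rr(n)\le 2r_{n+1}$. Combined with $\|\jj(k)-\ii(k)\|_\infty\le 2$, this gives the key estimate $\|\hat T_{\pi\ii,\rr(n)}(\pi\jj)-\jj(n)\|_\infty\le 4r_{n+1}$.

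Finally I assemble the Hausdorff bound. On the one hand, every element of $T_{\pi\ii,\rr(n)}(\pi\Sigma)$ is within max-norm $4r_{n+1}$ of some $\jj(n)\in\gamma_n=E$ by the previous step. On the other hand, given any $x\in E$, pick $\jj\in\Sigma$ with $\jj|_{n-1}=\ii|_{n-1}$ and $\jj(n)=x$; then $\hat T_{\pi\ii,\rr(n)}(\pi\jj)$ lies within max-norm $4r_{n+1}$ of $x$, and since $E$ is a finite subset of $(-1,1)^d$ it has strictly positive distance from $\partial Q$, so for all sufficiently large $n\in N_E$ these approximating points actually lie in $Q$. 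Together these give $d_H(T_{\pi\ii,\rr(n)}(\pi\Sigma),E)\le 4\sqrt d\,r_{n+1}\to 0$ along $N_E$, completing the proof. The only non-routine step is the cylinder reduction, but \eqref{piinjection} was tailored precisely for this purpose; after that, the rest is bookkeeping.
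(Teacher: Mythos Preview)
Your argument is correct and follows essentially the same route as the paper: use \eqref{piinjection} to show that only the cylinder $[\ii|_{n-1}]$ is visible at scale $\rr(n)$, then control the tail of the projection series to compare $T_{\pi\ii,\rr(n)}(\pi\Sigma)$ with $\gamma_n=E$. The only cosmetic difference is that the paper, when exhibiting points of $E$ inside the zoom, chooses $\jj_y$ to agree with $\ii$ at \emph{all} coordinates except the $n$-th, so that $\hat T_{\pi\ii,\rr(n)}(\pi\jj_y)=y$ exactly and $E\subset T_{\pi\ii,\rr(n)}(\pi\Sigma)$ without error; this yields the slightly sharper bound $2\sqrt d\,r_{n+1}$ in place of your $4\sqrt d\,r_{n+1}$, but of course either suffices.
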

\begin{proof}
 Let $E\in\mathcal{S}$ and $\ii\in \bigcap_{E\in\mathcal{S}}V_E$. Let $k(n)$ be a sub-sequence of $e(n)$ so that $\ii(k(n))=0$ for all $n\in\N$.
 Let $n$ be fixed. For $y\in E$ let $\jj_y$ be so that $\ii(m)=\jj_y(m)$ for all $m\neq k(n)$ and $\jj_y(k(n))=y$. Recall that $\ii(k(n))=0$. We have that $\|\pi\ii-\pi\jj_y\|_{\max}=\rr(k(n))\|\pi\jj\|_{\max}\leq \rr(k(n))$ and so
 \[
 y=\frac{\pi\jj_y-\pi\ii}{\rr(k(n))}\in T_{\pi\ii,\rr(k(n))}(\pi\Sigma).
 \]
 Thus we conclude that $E\subset T_{\pi\ii,\rr(k(n))}(\pi\Sigma)$. On the other hand, if $\|\pi\mathtt{k}-\pi\ii\|_{\max}\leq \rr(k(n))$, then $\ii|_{k(n)-1}=\mathtt{k}|_{k(n)-1}$ by \eqref{piinjection}. Thus there exists $y\in E$ so that $\mathtt{k}(k(n))=y$ and so $\|\pi\jj_y-\pi\mathtt{k}\|_{\max}\leq 2\rr(n+1)$ by \eqref{picontinuity}. Combining these geometric estimates we obtain that
\[
 d_H(T_{{\pi\ii},\rr(k(n))}(\pi\Sigma),E)\leq 2 \sqrt{d} r_{k(n)+1}.
\]
This proves the claim, since $r_n\to 0$ as $n\to\infty$.
\end{proof}
\begin{proof}[Proof of Theorem \ref{Oneiltheorem}]
 Set $X=\pi\Sigma$ and  $m=\pi\mu$. By definitions of tangent sets and $\KK_0$, it is obvious that $\Tan(X,x)\subset\KK_0$ for all $x\in X$. By combining Lemma \ref{Oneillemma} and the fact that tangent collections are closed, we get that $\Tan(X,x)\supset\KK_0$ for $m$ almost all $x$. It is clear that any set $A$ with $m(A)=1$ is dense in $X$.
\end{proof}

\begin{proof}[Proof of Theorem \ref{allpoints}]
 It suffices to find $X$, for which $\KK_0\lesssim\Tan(X,x)$ for all $x\in X$, since $\KK_0\approx \KK$. Set $X=\pi\Sigma$. Let $E\in\KK$ and $x=\pi\ii$. Fix a sequence $k(n)$ so that $\gamma_{k(n)}\to E$ in the Hausdorff metric. Denote $\ii(n)=i_n$. Since the origin is in every $\gamma(n)$, we can define $\jj_n=(i_1,\dots,i_{n-1},0,0,\ldots)$. Consider the point $T_{\pi\jj_{k(n)},\rr_{k(n)}}(x)=:x_n$, see Figure \ref{zoompicture}. This point is  in $Q$ so the sequence $(x_n)$ has a converging sub-sequence. Let $j(n)$ be so that $x_{k(j(n))}$ converges to $\overline{x}\in Q$. By the choice of $k(n)$,
 \begin{equation*}
  T_{\pi\jj_{k(j(n))},\rr_{k(j(n))}}(X)\hdarrow E.
 \end{equation*}
 By this and the assumption \eqref{ddistance}, we have that
 \begin{equation*}
  T_{\pi\ii,2\rr_{k(j(n))}}(X)
  \hdarrow
  aE+b,
 \end{equation*}
 where $a=\frac{1}{2}$ and $b\in Q$ is so that $aE+b\subset Q$.
\end{proof}

\begin{figure}
\centering
\includegraphics{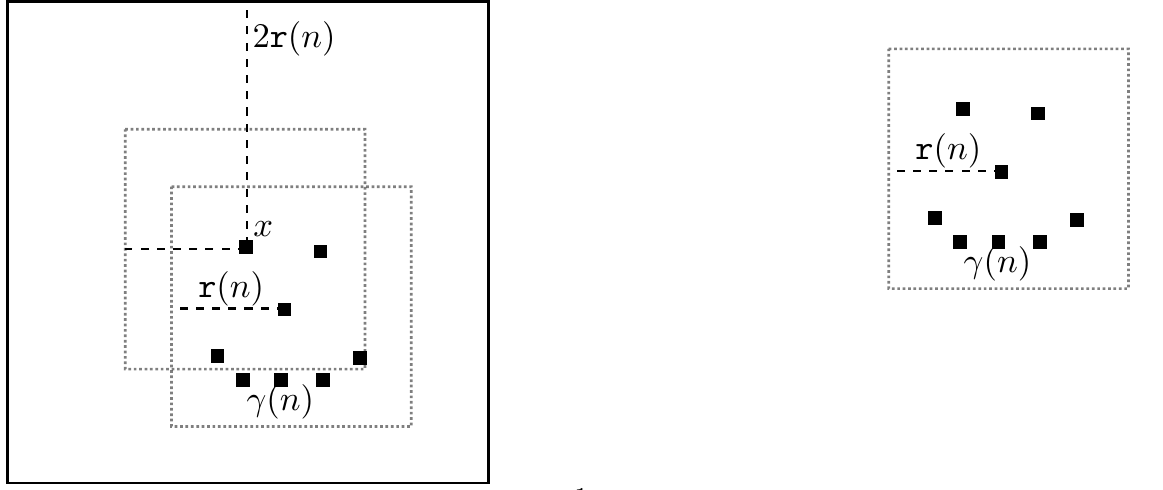}
\caption{The point $x=\pi\ii$ is not in the ``middle of $\gamma(n)$'', so one must use a doubled screen to see the whole set $\gamma(n)$. The assumption \eqref{ddistance} ensures that the doubled screen does not contain points $\pi\jj$ with $\jj(n-1)\neq \ii(n-1)$.
}
\label{zoompicture}
\end{figure}

\begin{example}
\label{Toby}
 Let $\delta_x$ denote the Dirac measure at $x$. For all $n\in\N$, let $\mu_n=(1-n^{-1})\delta_0+n^{-1}\delta_x$, where $x\neq 0$. Then $\mu_n\to\delta_0$ and so $\spt\mu=\{0\}$ but $\spt\mu_n=\{0,x\}$ for all $n$.
\end{example}
\begin{remark}\label{zero}
 As in remark \ref{minkowskiremark}, we can  modify this construction, namely by modifying the sequence $r_i$, so that the set $\pi\Sigma$ has Hausdorff dimension zero. By a similar technique we can also adjust the construction of O'Neil, such that $\dim_H \mu =0$, where $\mu$ is the measure of \cite[Theorem 3]{ONeil1995}. Here $\dim_H \mu$ is the Hausdorff dimension of $\mu$ as defined in \cite[(10.8)]{Falconer1997}. This is sometimes called the lower Hausdorff dimension of $\mu$.
\end{remark}

\begin{theorem}\label{Baire}
A typical compact set of $\KK$ is locally rich.
\end{theorem}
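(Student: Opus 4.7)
The strategy is to closely follow the proof of Theorem \ref{metrictypical}, adapted to the Euclidean setting. First, we fix a countable dense set $\{\alpha_n\}_{n=1}^\infty\subset\KK$ consisting of finite subsets of $\Q^d\cap(-1,1)^d$, with every such set appearing infinitely often. Define $\delta'_n=\min\{|x-y|:x\in\alpha_n,\,y\in\alpha_n\cup\partial Q,\,x\neq y\}$ in analogy with \eqref{delta}, and fix scales $s_n\searrow 0$ with $8\sqrt{d}\,s_{n+1}<\delta'_n$, mirroring \eqref{ddistance}. For each $(n,k)\in\N^2$ set
\[
\Gamma(n,k)=\bigcup_{x\in\alpha_n}(x+s_{n+1}\gamma_k)\in\KK.
\]
Then $d_H(\Gamma(n,k),\alpha_n)\leq s_{n+1}$, and the scaled copies $x+s_{n+1}\gamma_k$ are pairwise disjoint and contained in $Q$. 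In particular, $\{\Gamma(n,k)\}_{n\geq N}$ is dense in $\KK$ for every fixed $k,N\in\N$. Choosing $\eps_n\searrow 0$ small enough that $s_{n+1}\eps_n\ll\delta'_n$, the open balls $\mathcal{G}(n,k):=\{K\in\KK:d_H(K,\Gamma(n,k))<s_{n+1}\eps_n\}$ yield open dense unions $\bigcup_{n\geq N}\mathcal{G}(n,k)$ in $\KK$ for every $k,N\in\N$. Since $\KK$ is compact (hence complete), Baire's theorem tells us that
\[
\mathcal{G}:=\bigcap_{k=1}^\infty\bigcap_{N=1}^\infty\bigcup_{n=N}^\infty\mathcal{G}(n,k)
\]
is a dense $G_\delta$ in $\KK$, i.e., its complement is of first category.

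Next, we will show that every $K\in\mathcal{G}$ is locally rich. Fix such $K$, a point $x\in K$, and an index $k\in\N$. Along the infinitely many $n$ with $K\in\mathcal{G}(n,k)$, pick $y_n\in\Gamma(n,k)$ with $|y_n-x|<s_{n+1}\eps_n$ and write $y_n=x_n+s_{n+1}z_n$ with $x_n\in\alpha_n$ and $z_n\in\gamma_k$. Setting $t_n:=3s_{n+1}$ and $u_n:=(x_n-x)/t_n$, the condition $8\sqrt{d}\,s_{n+1}<\delta'_n$ ensures that $\Gamma(n,k)\cap Q(x,t_n+s_{n+1}\eps_n)=x_n+s_{n+1}\gamma_k$, and a direct computation yields
\[
T_{x,t_n}(\Gamma(n,k))=u_n+\tfrac{1}{3}\gamma_k,
\]
with $\|u_n\|_{\max}\leq(1+\eps_n)/3<1/2$, so this set lies well inside $Q$. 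A straightforward perturbation estimate for zooming (without boundary effects, since the target sits safely in the interior of $Q$) then gives $d_H(T_{x,t_n}(K),u_n+\tfrac{1}{3}\gamma_k)\leq\eps_n/3\to 0$. Passing to a subsequence with $u_n\to u_\infty$, we obtain $F:=u_\infty+\tfrac{1}{3}\gamma_k\in\Tan(K,x)$, which is closed in $\KK$. Since $0\in\gamma_k$, we have $u_\infty\in F$, so $\lambda=1/3$, $a=0\in\gamma_k$ and $b=u_\infty\in F$ satisfy $\lambda(\gamma_k-a)=F-b$, exhibiting $\{\gamma_k\}\lesssim\Tan(K,x)$ with uniform constant $\lambda_0=1/4$. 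Varying $k$ and invoking the density of $\{\gamma_k\}$ in $\KK_0$, the closedness of $\Tan(K,x)$ in $\KK$, and the stability of $\lesssim$ under $d_H$-limits of the witnessing data, we obtain $\KK_0\lesssim\Tan(K,x)$; combined with $\KK\approx\KK_0$, this yields $\Tan(K,x)\approx\KK$ as required.

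The main technical obstacle is the correct calibration of the zoom scale $t_n$: it must be large enough (hence the factor $3$) to capture the entire copy of $\gamma_k$ anchored at the nearest $x_n$, even though $x$ itself need not be an anchor point, yet small enough to exclude neighboring anchors and to keep the zoomed image comfortably inside $Q$. The separation assumption $8\sqrt{d}\,s_{n+1}<\delta'_n$ together with $\eps_n$ chosen much smaller than the gap ratio $\delta'_n/s_{n+1}$ is precisely what allows the clean identification of $T_{x,t_n}(\Gamma(n,k))$ and the uniform transfer of the perturbation estimate to $T_{x,t_n}(K)$, so that the desired tangent structure is produced at every point $x\in K$ rather than at merely almost every point.
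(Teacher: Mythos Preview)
Your proof is correct and follows essentially the same approach as the paper: build template sets consisting of many scaled copies of $\gamma_k$ placed at anchor points, show that the open balls around these templates are dense in $\KK$, and then verify that zooming into any point of a set close to such a template recovers a scaled and translated copy of $\gamma_k$. The only notable difference is that the paper anchors its templates on subsets $A_n$ of the $3^{-n}$-adic grid $C_n$ (so $\GG(n,k)$ is a union of balls over all such $A_n$, and density comes from grid approximation), whereas you anchor on a single dense sequence $\{\alpha_n\}$ of arbitrary finite sets (so $\GG(n,k)$ is a single ball, and density comes from the density of $\{\alpha_n\}$); this more closely parallels the proof of Theorem~\ref{metrictypical}, and yields the scaling factor $\tfrac13$ in place of the paper's $\tfrac12$.
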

For each $n\in \N$, decompose the cube $Q$ into $3^{nd}$  disjoint $2\cdot 3^{-n}$-adic subcubes (note that $Q=[-1,1]^{d}$). Let $\mathcal{D}_n$ be the collection of these $3^{nd}$ subcubes. Define $C_n := \cup_{C \in \mathcal{D}_n} \{x_c\}$ where $x_c$ is the center point of the subcube $C$. Recall the definition of $\gamma_n$ and $\delta_n$ at the beginning of this section. Let $A_n \subset C_n$ and define $A_{n,k}:= \cup_{a\in A_n}(a+3^{-(n+1)}\gamma_k)$. It is easy to see that $A_{n,k}\in \KK$. Let 
\begin{equation}
\GG(n,k):= \bigcup_{A_n \subset C_n}U_{\KK}(A_{n,k},3^{-2n}\delta_k),
\end{equation}
where $U_{\KK}(A_{n,k}, 3^{-2n}\delta_k)$  is the open ball of $\KK$ with center $A_{n,k}$ and radius $3^{-2n}\delta_k$.  

\begin{lemma}\label{b}
Let $E\in \GG(n,k)$, then for each $x\in E$, there exist vector $b(x)\in Q(0,1/2)$ such that 
\begin{equation}\label{11}
d_H( T_{x,2\cdot3^{-(n+1)}}(E), \frac{1}{2}\gamma_k+b(x))<3^{-n}\delta_k.
\end{equation}
\end{lemma}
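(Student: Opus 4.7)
The plan is to identify the unique ``local copy'' of $\gamma_k$ that the point $x$ sees after zooming, and to use the smallness of $d_H(E,A_{n,k})$ to show that $T_{x,2\cdot 3^{-(n+1)}}(E)$ is close to this copy after rescaling.

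First I would unpack the hypothesis: $E\in\GG(n,k)$ means that there exists $A_n\subset C_n$ with $d_H(E,A_{n,k})<3^{-2n}\delta_k$. Fix $x\in E$, pick a nearest point $p\in A_{n,k}$ (so $|x-p|<3^{-2n}\delta_k$), and write $p=a+3^{-(n+1)}y_0$ with $a\in A_n$ and $y_0\in\gamma_k$. Define $b(x)=(a-x)/(2\cdot 3^{-(n+1)})$. To verify $b(x)\in Q(0,1/2)$, note that by the definition of $\delta_k$, every $y\in\gamma_k$ satisfies $\|y\|_{\max}\le 1-\delta_k$; hence $\|a-x\|_{\max}\le \|a-p\|_{\max}+\|p-x\|_{\max} \le 3^{-(n+1)}(1-\delta_k)+3^{-2n}\delta_k$, and dividing by $2\cdot 3^{-(n+1)}$ yields $\|b(x)\|_{\max}\le 1/2$ whenever $n\ge 1$.

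Next I would compute $T_{x,s}(A_{n,k})$ exactly, where $s=2\cdot 3^{-(n+1)}$. Distinct centers in $A_n\subset C_n$ are $\|\cdot\|_{\max}$-separated by at least $2\cdot 3^{-n}$, while each small copy $a'+3^{-(n+1)}\gamma_k$ is contained in $Q(a',3^{-(n+1)}(1-\delta_k))$. Combined with the bound on $\|a-x\|_{\max}$ above, this shows that no copy at $a'\neq a$ meets the window $Q(x,s)$. Consequently,
\[
 T_{x,s}(A_{n,k})=\frac{a-x}{s}+\frac{1}{2}\gamma_k=b(x)+\frac{1}{2}\gamma_k,
\]
and this set lies strictly inside $Q$.

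Finally I would transfer the Hausdorff approximation through the zoom. Set $E'=E\cap Q(x,s)$ and $A'=A_{n,k}\cap Q(x,s)=a+3^{-(n+1)}\gamma_k$. For $y\in E'$ take the nearest $y'\in A_{n,k}$, so $|y-y'|<3^{-2n}\delta_k$; using the separation of the small copies together with the $\delta_k$-interior placement of $\gamma_k$ inside $Q$, one shows that $y'$ must lie in the copy at $a$ and stays inside $Q(x,s)$, giving $\dist(y,A')<3^{-2n}\delta_k$. The reverse inclusion is analogous. Hence $d_H(E',A')<3^{-2n}\delta_k$, and since $\hat T_{x,s}$ is affine with Lipschitz constant $1/s=3^{n+1}/2$, one obtains
\[
 d_H\bigl(T_{x,s}(E),\tfrac{1}{2}\gamma_k+b(x)\bigr)<\frac{3^{-2n}\delta_k}{s},
\]
from which the claimed estimate of order $3^{-n}\delta_k$ follows.

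The main obstacle is the boundary-effect tracking in this last step: an $(E,A_{n,k})$-match in $d_H$ does not automatically translate to a match of the restricted sets $E'$ and $A'$, because the nearest point of $A_{n,k}$ to some $y\in E'$ might a priori belong to a different small copy or fall outside $Q(x,s)$. Ruling these out uses both the $2\cdot 3^{-n}$-separation of the centers in $A_n$ and the $\delta_k$-interior position of $\gamma_k$ inside $Q$, which provides just enough slack to absorb the $3^{-2n}\delta_k$ Hausdorff error.
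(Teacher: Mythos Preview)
Your argument is correct and follows essentially the same route as the paper: identify the unique center $a\in A_n$ whose local copy $a+3^{-(n+1)}\gamma_k$ is the one seen from $x$, set $b(x)=(a-x)/(2\cdot 3^{-(n+1)})$, use the $2\cdot 3^{-n}$ separation of centers together with the $\delta_k$-interior placement of $\gamma_k$ to isolate that copy inside the window, and then rescale. The paper phrases the isolation step as $E\cap Q(a,3^{-(n+1)})=E\cap Q(x,2\cdot 3^{-(n+1)})$ and leaves the boundary-effect check you spell out as an ``elementary geometric argument''; your version simply makes that argument explicit. One small remark: the final rescaling gives $3^{-2n}\delta_k/(2\cdot 3^{-(n+1)})=\tfrac{3}{2}\,3^{-n}\delta_k$, not $3^{-n}\delta_k$, so your closing phrase ``of order $3^{-n}\delta_k$'' is honest --- the same constant appears in the paper's computation and is immaterial for the application, where only convergence to $0$ as $n\to\infty$ is used.
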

\begin{proof}
Since $E\in \GG(n,k)$, there exist $A_{n,k} \subset C_n$ so that $E \in U_{\KK}(A_{n,k},3^{-2n}\delta_k)$. By an elementary geometric argument, we obtain that for each $x\in E$, there exists a unique point $a\in A_{n,k}$ such that
\begin{equation}\label{22}
E\cap Q(a, 3^{-(n+1)}) = E\cap Q(x, 2\cdot3^{-(n+1)})
\end{equation}
and  
\begin{equation}\label{33}
d_H(E\cap Q(a, 3^{-(n+1)}), a+3^{-(n+1)}\gamma_k ) \leq 3^{-2n}\delta_k.
\end{equation}
Note that  there exists $b(x) \in Q$ such that
\begin{equation}\label{m}
T_{x, 2\cdot3^{-(n+1) }}(Q(a, 3^{-(n+1)}))= \frac{1}{2}Q +b(x).
\end{equation}
Since  $(\frac{1}{2}Q +b(x)) \subset Q$, we have $b(x)\in Q(0, 1/2)$. In the end by applying  estimates \eqref{22}, \eqref{33} and \eqref{m}, we arrive the estimate \eqref{11}.
\end{proof}

\begin{proof}[Proof of Theorem \ref{Baire}]
For any $k$ and $N$, the set $\bigcup_{n=N}^{\infty}\GG(n,k)$ is open. It is also dense, since any compact set can be approximated by $2\cdot3^{-n}$-adic cubes. Let
\[
 \GG_k: =\bigcap_{N=1}^{\infty} \bigcup_{n=N}^{\infty} \GG(n,k),
\]
which is a countable intersection of dense open sets. Let $E \in \GG_k$, then there exists a sequence $(n_i)^{\infty}_{i=1}$ ($n_i\nearrow \infty$ as $i\rightarrow\infty$) such that $E \in \GG(n_{i},k)$ for $i \in \N.$  Lemma \ref{b} implies that for very $x \in E$ and above $n_i$, there is $b_{n_i}(x) \in Q(0,1/2)$ such that the estimate \eqref{11} holds. Since $(b_{n_i}(x))^{\infty}_{i=1}\subset Q(0,1/2)$, we can find a sub-sequence $(b_{j}(x))$ of $(b_{n_i}(x))^{\infty}_{i=1}$ such that  $b_{j}(x) \rightarrow c(x)$ where $c(x) \in Q(0,1/2)$.  Thus we have $\{\frac{1}{2}\gamma_k + c(x) \} \in \Tan(E,x)$.

By the above discussion,  we have that $\bigcap_{k=1}^{\infty}\GG_k$ is a countable intersection of open and dense sets, and thus its complement is of first category. For each $E \in \bigcap_{k=1}^{\infty}\GG_k$, we have  $\Tan(E,x) \approx \KK_0\approx \KK$ for all $x\in E$. It means that every element of $\bigcap_{k=1}^{\infty}\GG_k$ is a locally rich set, thus we have finished the proof.
\end{proof}

\begin{remark} Define $\widetilde{\GG}(n,k)$ by
\begin{equation}
\widetilde{\GG}(n,k):= \bigcup_{A \subset C_n}U_{\KK}(A ,(\# \gamma_k)^{-n}3^{-n^{2}}),
\end{equation}
where $(\# \gamma_k)$ means the cardinality of $\gamma_k$. Let 
\[
\widetilde{\GG}:=  \bigcap^{\infty}_{k=1} \bigcap^\infty_{N=1} \bigcup^{\infty}_{n=N}\widetilde{\GG}(n,k).
\]
By the same argument as above, we have that the complement of $\widetilde{\GG}$ is of first category. Notice that for every $E \in \widetilde{\GG}(n,k)$, we can find at most  $(\# \gamma_k)3^n$ balls with radius  $(\# \gamma_k)^{-n}3^{-n^{2}}$ so that they cover $E$. It implies that every element $E$ of $\widetilde{\GG}$ satisfies $\underline{\dim}_M E=0$. Thus a typical compact set of $\KK$ has zero lower box-counting dimension. The result that typically a compact has zero lower box-counting dimension, was proved earlier in \cite{FengWu1997}, by a different method.
\end{remark}

\section{Locally rich infinitely generated self-similar set}
Here we consider the sub-space $\KK_0^+$ of $\KK$, where
\[
 \KK_0^+=\{K\in\KK:0\in K \text{ and }  x_1\geq 0 \text{ for all } x\in K\}.
\]
It is clear that $\KK_0^+\approx\KK$ and $\KK_0^+$ is also a separable metric metric space. As in the previous constructions, we choose a countable dense set $\{\gamma_n\}_{n=1}^\infty$ in $\KK_0^+$. Moreover we require that each set $\gamma_n$ is a finite union points and $0\in\gamma_n$. 
Fix sequences $a_n \searrow 0$ and $\lambda_n \searrow 0$ with the following properties
\begin{enumerate}
 \item $a_1+\lambda_1\leq 1$
 \item $a_{n+1}+\lambda_{n+1}<\frac{a_{n}}{n} $ for all $n\in\N$
 \item $\frac{a_n}{\lambda_n}\to 0$ as $n\to\infty$
\end{enumerate}
For example, one could choose $a_n=2^{-n^2}$ and $\lambda_n=n a_n$ for all $n\in\N$. By setting $\widetilde{\gamma}_n= \vec{a}_n +\lambda_n\gamma_n$, where $\vec{a}_n=(a_n,0,0,\ldots)$, we have $\widetilde{\gamma}_n \subset Q$ for all $n\in\N$. Set 
\[
C_0=\{0\}\cup\bigcup_{n=1}^\infty \widetilde{\gamma}_n.
\]
The set $C_0$ is made of the elements of $(\gamma_n)_{n=1}^{\infty}$ scaled with $\lambda_n$ and positioned so that the leftmost point of the $n$:th piece on the $x_1$-axis is $a_n$. The condition $(1)$ ensures that the construction stays in $Q$ and $(2)$ gives the separation of the pieces. Obviously zero is the only clustering point of $C_0$, so $C_0$ is compact. The next theorem shows that the tangent space of $C_0$ at zero is large, as might be expected.

\begin{figure}
\centering
\includegraphics{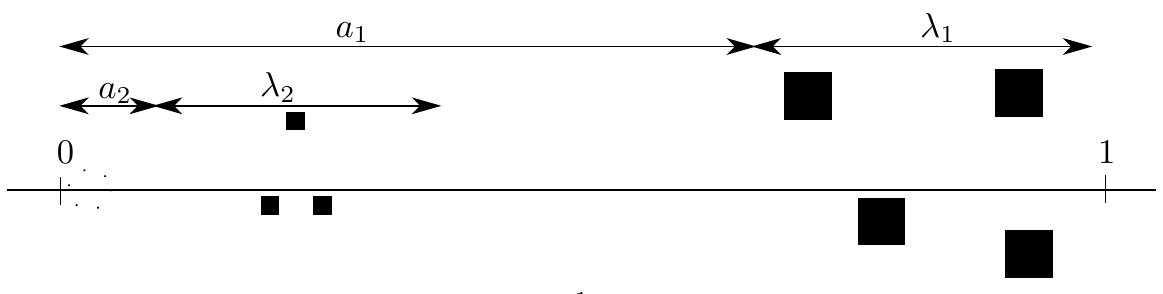}
\caption{First level approximation of the set $C_\infty$}
\label{ifspicture}
\end{figure}

\begin{theorem}
 \label{thmC00}
 We have  $\Tan(C_0,0)= \KK_0^+$.
\end{theorem}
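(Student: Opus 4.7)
The plan is to prove the two inclusions of the equality separately. The inclusion $\Tan(C_0,0)\subseteq\KK_0^+$ is immediate from the construction: since $0\in\gamma_m$ and $a_m>0$, each piece $\widetilde\gamma_m=\vec a_m+\lambda_m\gamma_m$ lies in $\{x\in Q:x_1\ge a_m>0\}$, hence $C_0\subset\{x_1\ge 0\}$ and $0\in C_0$. Both properties are preserved by the zoom map $T_{0,r}$ and by Hausdorff limits, so any tangent of $C_0$ at $0$ automatically belongs to $\KK_0^+$.

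For the reverse inclusion, fix $K\in\KK_0^+$. By density of $(\gamma_n)$ in $\KK_0^+$ I would pick a sub-sequence $(n_k)$ with $\gamma_{n_k}\hdarrow K$, and, passing to a further sub-sequence if necessary, assume that $r_k:=a_{n_k}+\lambda_{n_k}$ decreases strictly to $0$. The aim is to show $T_{0,r_k}(C_0)=(C_0/r_k)\cap Q\hdarrow K$, and I would analyse this set piece by piece. The $n_k$-th piece after rescaling is $\widetilde\gamma_{n_k}/r_k=(a_{n_k}/r_k)\,e_1+(\lambda_{n_k}/r_k)\,\gamma_{n_k}$, an affine image of $\gamma_{n_k}$ whose translation part tends to $0$ and whose scaling factor tends to $1$ by condition~(3); combined with $\gamma_{n_k}\hdarrow K$ this gives $\widetilde\gamma_{n_k}/r_k\hdarrow K$, and since the scaling factor is less than $1$ and $\gamma_{n_k}\subset Q\cap\{x_1\ge 0\}$, the rescaled piece is contained in $Q$ for every $k$.

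The main work is to handle the other pieces. For $m<n_k$, condition~(2) applied at $n=n_k-1$ gives $r_k<a_{n_k-1}/(n_k-1)\le a_{n_k-1}\le a_m$, so every point of $\widetilde\gamma_m/r_k$ has first coordinate strictly greater than $1$ and $(\widetilde\gamma_m/r_k)\cap Q=\emptyset$. For $m>n_k$, the inclusion $\widetilde\gamma_m\subset[0,a_m+\lambda_m]\times[-\lambda_m,\lambda_m]^{d-1}$ combined with the monotonicity of $(a_n)$ and $(\lambda_n)$ gives $a_m+\lambda_m\le a_{n_k+1}+\lambda_{n_k+1}<a_{n_k}/n_k$, and since $r_k\ge\lambda_{n_k}$, the union $\bigcup_{m>n_k}\widetilde\gamma_m/r_k$ is contained in the sup-norm ball of radius $a_{n_k}/(n_k\lambda_{n_k})$ around the origin, which shrinks to $\{0\}$ by condition~(3). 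Because $0\in K$, this vanishing neighbourhood of the origin is absorbed in the Hausdorff limit, yielding $T_{0,r_k}(C_0)\hdarrow K$ and hence $K\in\Tan(C_0,0)$.

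The main obstacle is the joint bookkeeping forced by the three defining conditions on $(a_n)$ and $(\lambda_n)$: condition~(2) must simultaneously push the pieces indexed by $m<n_k$ outside the unit cube and trap the pieces indexed by $m>n_k$ in a negligible neighbourhood of the origin, while condition~(3) ensures that the chosen scale $r_k=a_{n_k}+\lambda_{n_k}$ matches the diameter of $\widetilde\gamma_{n_k}$ closely enough for the rescaled piece to approximate $K$ itself rather than some further contracted copy of it. Once these estimates are in place, the equality $\Tan(C_0,0)=\KK_0^+$ follows.
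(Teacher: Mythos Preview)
Your proof is correct and follows essentially the same route as the paper: choose a sub-sequence $\gamma_{n_k}\to K$, zoom in at scale $t_k=a_{n_k}+\lambda_{n_k}$, and show that the rescaled $n_k$-th block tends to $K$ while the remaining blocks either leave $Q$ (for $m<n_k$, by condition~(2)) or collapse to the origin (for $m>n_k$, by conditions~(2) and~(3)). The paper packages the last two groups together as a single set $A_{n(k)}$ and bounds $d_H(A_{n(k)}\cup B_{n(k)},B_{n(k)})$ via the point $\vec a_{n(k)}/t_{n(k)}\in B_{n(k)}$, whereas you invoke $0\in K$ directly to absorb the near-origin pieces; these are cosmetically different formulations of the same estimate.
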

\begin{proof}
It is obvious that $\Tan(C_0,0)\subset \KK_0^+$. For any $K\in\KK_0^+$, there exist a sub-sequence $\{\gamma_{n(k)}\}^{\infty}_{k=1}$, such that $\gamma_{n(k)} \hdarrow K$ as $k\rightarrow\infty$. By choosing real numbers $t_{n(k)}=a_{n(k)}+\lambda_{n(k)}$, we get that $T_{0,t_{n(k)}}(C_0) = A_{n(k)}\cup B_{n(k)}$, where
\[
 A_{n(k)}=\frac{1}{t_{n(k)}} \bigg(\{0\}\cup\bigcup_{m=n(k)+1}^\infty \widetilde{\gamma}_m \bigg)
\quad\text{and}\quad
B_{n(k)}= \frac{1}{t_{n(k)}}(\vec{a}_{n(k)} +\lambda_{n(k)} \gamma_{n(k)}).
\]
By the  triangle inequality, we have
\begin{align*}
 d_H(A_{n(k)} \cup B_{n(k)}, K)
 &\leq d_H (A_{n(k)} \cup B_{n(k)}, B_{n(k)}) +d_H(B_{n(k)}, K)\\
 &\leq \frac{d a_{n(k)}}{t_{n(k)}}+d_H(B_{n(k)}, K).
\end{align*}
By the choice of the sequences $(a_n)$ and $(\lambda_n)$, we have $a_{n(k)}/t_{n(k)}\to 0$ and $\lambda_{n(k)}/t_{n(k)}\to 1$ and so
$T_{0,t_{n(k)}}(C_0) \hdarrow K$. Thus the result follows.
\end{proof}

\subsection{Infinitely generated self-similar set}
Since each  $\widetilde{\gamma}_n$ is a finite set of points we may consider a countable set of similitude contractions, so that each point of $\widetilde{\gamma}_n$ is the image of zero under one of these contractions. More precisely, denote $\widetilde{\delta}_n=(8\sqrt{d})^{-1}\min\{|x-y|:x,y\in\gamma_n,x\neq y\}$ and set
\begin{equation}\label{ifs}
f_{n,m}(x)=\lambda_n \widetilde{\delta}_{n} \eps_n x+\xi_{n,m}
\end{equation}
for  $n\geq 1, n\in\N$ and $1\leq m \leq \#\gamma_n$, where $\eps_1\leq 1/2$ and $\eps_n\searrow0$ and the translations $\xi_{n,m}$ are fixed so that the union of $f_{n,m}(0)$ over $m$ equals $\widetilde{\gamma}_n$. We add the mapping $f_{0,1}\equiv 0$. Note that for fixed $n$, the mappings $f_{n,m}$ have the same contraction ratio $r_{(n,m)}$. Usually, we denote this number by $r_n$ for short ($r_{(0,1)} =0$).

For any $A\in\KK$, we define
\begin{equation}
 \label{phicompact}
 \phi(A) = \bigcup_{n=0}^\infty\bigcup_{m=1}^{\#\gamma_n}f_{n,m}(A)
\end{equation}
Let $(x_n)$ be a sequence in $\phi_k(A)$. If $(x_n)$ stays in a finite union of $f_{n,m}(A)$ where $n\geq 0$ and $m\geq 1$, then it has a converging sub-sequence since $f_{n,m}(A)$ are compact. If not, then it must have a sub-sequence that converges to $0\in\phi(A)$. Thus $\phi(A)$ compact. Therefore by setting $C_k = \phi^k(Q)$ for all $k\geq 1$, we get that $C_k\subset C_{k-1}$ for all $k\geq 2$ and the set $C_\infty$ defined by
\[
 C_\infty=\bigcap_{k=1}^\infty C_k,
\]
is compact and non-empty. The sequence $(\eps_n)$ ensures that $\Tan(C_1,0)=\Tan(C_0,0)$ and thus $\Tan(K,0)=\Tan(C_0,0)$ for any $C_0\subset K\subset C_1$. Especially this applies to $C_\infty$.

We set
\begin{equation}\label{symbol}
 I=\{(n,m): n \geq 1,1\leq m\leq \#\gamma_n\}
\end{equation}
and consider the symbol space $\IN$. Recall the notations related to symbol spaces, that were given in the introduction. We can consider the infinitely generated self-similar IFS where the mappings $f_{n,m}$ are from the construction of $C_\infty$. Define the projection mapping $\pi\colon\IN\to Q$ by 
\[
 \{\pi(\ii)\}=\bigcap_{n=1}^\infty f_{\iin{n}}(Q),
\]
where $f_{\iin{n}}=f_{i_1}\circ\dots\circ f_{i_n}$. Similarly, we denote $r_{\iin{n}}=r_{i_1}\cdots r_{i_n}$. It is  clear that the limit set $\bigcup_{\ii\in\IN}\{\pi(\ii)\}=:F$ is a subset of $C_\infty$. Since $C_\infty$ is compact, we also have that $\overline{F}\subset C_\infty$. On the other hand, let $x\in S$ where 
\begin{equation}
 \label{Sdef}
 S=\{x\in C_\infty: x=f_\ii(0)\text{ for some }\ii\in I^*\},
\end{equation}
and let $\jj\in\IN$ be arbitrary. Fix $x=f_\ii(0)$. It is clear from the construction of the mappings that $\pi((n,1)\jj)\to 0$ as $n\to\infty$. Thus we have $\pi(\ii (n,1)\jj)\to x$ and so $S\subset \overline{F}$. Since $S$ is dense in $C_\infty$ we also have that $C_\infty\subset\overline{F}$. Now we have that the set $C_\infty$ is the closure of a limit set of an infinitely generated self-similar IFS. This gives us tools for estimating the dimensions of $C_\infty$. By \cite[Corollary 3.17]{MauldinUrbanski1996}, we know that
\[
 \dimh F =\inf\{ s>0: \sum_{i\in I}r_i^s <1\}.
\]
This gives that $\dimh F >0$, by the reason that $\sum_{i\in I}r_i^s  > 1$ when $s$ is small. Since $F \subset C_\infty$, we have $\dimh C_\infty>0$.  Also, \cite[Theorem 3.1]{MauldinUrbanski1996} gives that $\dimp F=\dimp C_\infty = \ydimb F = \ydimb C_\infty$.

\subsection{Tangent properties of $C_\infty$}
Here we investigate the tangent sets of $C_\infty$. Theorem \ref{case1} shows that the tangent space of $C_\infty$ is large in a dense subset. For finitely generated self-similar sets, the tangent sets should are similar to the set itself \cite[Theorem and Remarks]{Bandt2001}. For infinitely generated sets we have a similar result for ``finitely generated points'', meaning points of the form $\pi\ii$ with $\ii\in \IN_N=\{(n,m)\in I:n\leq N\}^\N$, see Theorem \ref{case2}.

\begin{theorem}
 \label{case1}
 The set $S$, defined in $\eqref{Sdef}$, is dense in $C_\infty$ and $\Tan(C_\infty,x)=\KK_0^+$ for all $x\in S$.
\end{theorem}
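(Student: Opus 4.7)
The plan is to split the proof into two parts: first show that $S$ is dense in $C_\infty$, and then establish $\Tan(C_\infty, x) = \KK_0^+$ for every $x \in S$ by transporting the tangent calculation at $0$ (which is already known from Theorem \ref{thmC00} combined with the recorded identity $\Tan(C_\infty, 0) = \Tan(C_0, 0)$) through the self-similar structure of the IFS.

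For the density of $S$: given $x \in C_\infty$, I would use the identity $C_\infty = \phi(C_\infty) = \{0\} \cup \bigcup_{(n,m) \in I} f_{n,m}(C_\infty)$ to iteratively build an address for $x$. The key input is that the level-one cubes $f_{n,m}(Q)$ are pairwise disjoint: within a fixed level $n$, the centers $\xi_{n,m}$ are separated by at least $8\sqrt{d}\,\lambda_n\widetilde{\delta}_n$, while each cube has diameter $2\sqrt{d}\,\lambda_n\widetilde{\delta}_n\eps_n$ with $\eps_n \leq 1/2$; for distinct levels $n \neq n'$, the $x_1$-ranges of $\widetilde{\gamma}_n$ and $\widetilde{\gamma}_{n'}$ are disjoint by condition (2) on $(a_n)$ and $(\lambda_n)$. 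At each step either the iteration halts with $x \in S$, or it extends to produce $\ii \in \IN$ with $x = \pi(\ii)$, in which case $S \ni f_{\ii|_k}(0) \to x$ since $r_{\ii|_k} \to 0$.

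For the tangent equality at $x = f_\ii(0) \in S$: the map $f_\ii$ is a similitude of the form $f_\ii(y) = r_\ii y + x$ with $r_\ii > 0$ and no rotation. Since $0$ is the center of $Q$, the point $x$ is the center of $f_\ii(Q) = x + r_\ii Q$. Iterating the level-one cube disjointness yields $C_\infty \cap f_\ii(Q) = f_\ii(C_\infty) = r_\ii C_\infty + x$. Therefore, for any $r < r_\ii$ the box $x + rQ$ sits inside $f_\ii(Q)$, and a direct unwrapping of the definitions gives
\[
 T_{x,r}(C_\infty) = T_{0,\, r/r_\ii}(C_\infty).
\]
Letting $r \searrow 0$, the parameter $r/r_\ii$ also tends to $0$, so $\Tan(C_\infty, x) = \Tan(C_\infty, 0) = \KK_0^+$.

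The main technical point to verify carefully is the iterated disjointness, namely $C_\infty \cap f_\ii(Q) = f_\ii(C_\infty)$ for every $\ii \in I^*$. The level-one case is the geometric heart of the argument, hinging on the definition of $\widetilde{\delta}_n$ in \eqref{ifs} and on the growth conditions (1)--(3) on $(a_n)$ and $(\lambda_n)$; the iterated version then follows by a straightforward induction, since applying $f_\ii$ simply rescales and translates the level-one picture. Once this is in place, both the density of $S$ and the tangent identity reduce to routine computations with similitudes.
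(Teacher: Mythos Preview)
Your proposal is correct and follows essentially the same route as the paper. The paper dismisses density as ``trivial'' (it was effectively observed just before the theorem via the nested cubes $f_{\ii|_k}(Q)$), while you spell out the address construction; for the tangent identity both you and the paper use the similitude form of $f_\ii$ together with the disjointness of the cubes $f_\ii(Q)$ at each level to obtain $T_{x,r}(C_\infty)=T_{0,r/r_\ii}(C_\infty)$ and then invoke $\Tan(C_\infty,0)=\Tan(C_0,0)=\KK_0^+$.
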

\begin{proof}
 The first claim is trivial. For the second one, fix $x=f_\ii(0)$. The mappings $f_\ii$ are made of scaling $r_\ii$ and translation $f_\ii(0)$, so we have $T_{0,t}(C_\infty)=T_{f_\ii(0),r_\ii t}(f_\ii (C_\infty) )$. By the choice of $r_\ii$, we have that $f_\ii(Q)\cap f_\jj(Q)=\emptyset$ for all different $\ii$ and $\jj$ with $|\ii|=|\jj|$, implying that $C_\infty\cap Q(f_\ii(0),r_\ii t)= f_\ii(C_\infty)\cap Q(f_\ii(0),r_\ii t)$ for all $0<t<1$. Thus $T_{0,t}(C_\infty)= T_{f_\ii(0),r_\ii t}(f_\ii (C_\infty) ) =T_{f_\ii(0),r_\ii t}(C_\infty )$ and so $\Tan(C_\infty,x)=\Tan(C_0,0)=\KK_0^+$ by Theorem \ref{thmC00}.
\end{proof}

\begin{theorem}
 \label{case2}
 If $x=\pi\ii$, where $\ii\in\bigcup_{n=1}^\infty\IN_n$, then we have  $\Tan(C_\infty,x)\subset \{Q\cap(\alpha C_\infty+\beta)\}_{\alpha\geq 1, \beta\in[-\alpha,\alpha]}$.
\end{theorem}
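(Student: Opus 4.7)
The plan is to fix $x=\pi\ii$ with $\ii\in\IN_N$ and to describe $T_{x,t}(C_\infty)$ explicitly for small $t$. Two ingredients are needed. First, a structure equation $C_\infty\cap f_{\iin{k}}(Q) = f_{\iin{k}}(C_\infty)$ for every $k\ge 1$: this follows by induction from the pairwise disjointness of the cylinders $\{f_{n,m}(Q)\}_{(n,m)\in I}$, itself guaranteed by the factor $\widetilde{\delta}_n$ in \eqref{ifs} together with $\eps_n\le 1/2$. Second, a separation estimate: there exists a constant $c>1$ such that for every $y\in C_\infty$ different from $x$, writing $p\ge 0$ for the largest index with $y\in f_{\iin{p}}(Q)$, one has $\|y-x\|_{\max}\ge c\, r_{\iin{p+1}}$ whenever $p\ge 1$. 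This is proved by comparing local coordinates inside $f_{\iin{p}}(Q)$: $y$ lies in some sibling cylinder of $f_{\iin{p+1}}(Q)$ at level $p+1$, whose local distance from $f_{i_{p+1}}(Q)$ is at least $(8\sqrt d-2\eps_{n_{p+1}})\lambda_{n_{p+1}}\widetilde{\delta}_{n_{p+1}}$ by the choice of $\widetilde{\delta}_{n_{p+1}}$; scaling by $r_{\iin{p}}$ and subtracting the diameter of $f_{\iin{p+1}}(Q)$ produces the bound. The remaining case $p=0$ forces $y=0$, and is handled by the elementary observation that $x\neq 0$ (since the first coordinate of $\xi_{i_1}$ is at least $a_{n_1}>0$), so $0\notin Q(x,t)$ for all small enough $t$.

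Given these ingredients, for each small $t>0$ let $\ell(t)$ be the largest index with $r_{\iin{\ell(t)}}>t$. If $y\in C_\infty\cap Q(x,t)$ with $y\neq x$, the separation estimate gives $r_{\iin{p+1}}\le t/c<r_{\iin{\ell(t)}}$, whence $p\ge\ell(t)$ and $y\in f_{\iin{\ell(t)}}(Q)$. Combining this with the structure equation yields $C_\infty\cap Q(x,t)=f_{\iin{\ell(t)}}(C_\infty)\cap Q(x,t)$. Writing $x=f_{\iin{\ell(t)}}(y_{\ell(t)})$ with $y_{\ell(t)}\in Q$ the projection of the tail sequence $(i_{\ell(t)+1},i_{\ell(t)+2},\dots)$, and using the identity $f_{\iin{\ell(t)}}(z)=r_{\iin{\ell(t)}}z+f_{\iin{\ell(t)}}(0)$, one obtains
\[
 T_{x,t}(C_\infty)=\bigl(\alpha(t)\, C_\infty+\beta(t)\bigr)\cap Q,\qquad \alpha(t):=\frac{r_{\iin{\ell(t)}}}{t},\ \beta(t):=-\alpha(t)\, y_{\ell(t)}.
\]
The restriction $\ii\in\IN_N$ forces every $r_{i_k}$ into a fixed interval $[r^{(N)}_{\min},r^{(N)}_{\max}]\subset(0,1)$, so $\alpha(t)\in(1,1/r^{(N)}_{\min}]$ and $\|\beta(t)\|_{\max}\le\alpha(t)$.

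To conclude, let $t_n\searrow 0$ be any sequence with $T_{x,t_n}(C_\infty)\hdarrow F$. Passing to a subsequence, $\alpha(t_n)\to\alpha\ge 1$ and $y_{\ell(t_n)}\to y^*\in Q$, so $\beta(t_n)\to\beta:=-\alpha y^*$ with $\|\beta\|_{\max}\le\alpha$. For any $z\in F$, pick $z_n\in T_{x,t_n}(C_\infty)$ with $z_n\to z$ and write $z_n=\alpha(t_n)w_n+\beta(t_n)$ with $w_n\in C_\infty$; compactness of $C_\infty$ gives, along a further subsequence, $w_n\to w^*\in C_\infty$, hence $z=\alpha w^*+\beta\in Q\cap(\alpha C_\infty+\beta)$, which is exactly the required inclusion. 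I expect the main obstacle to be the separation estimate, where the constants $\widetilde{\delta}_n$, $\lambda_n$ and $\eps_n$ must be balanced carefully to ensure that sibling cylinders end up separated by substantially more than their own diameters.
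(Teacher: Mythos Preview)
Your approach is essentially the same as the paper's: identify $T_{x,t}(C_\infty)$ with $Q\cap(\alpha(t)C_\infty+\beta(t))$ for bounded $\alpha(t)\ge 1$, pass to a subsequence, and read off the limit. The paper phrases the separation more compactly as $\diam(\pi[\iin{n}])\le\dist(\pi[\iin{n}],\pi[\jjn{n}])$, but your cylinder-by-cylinder estimate does the same job.

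There is, however, a genuine gap in your final paragraph. The statement $\Tan(C_\infty,x)\subset\{Q\cap(\alpha C_\infty+\beta)\}_{\alpha,\beta}$ is an inclusion of \emph{collections}: you must show that every tangent $F$ \emph{equals} some $Q\cap(\alpha C_\infty+\beta)$, not merely that $F$ is contained in one. Your argument (picking $z\in F$, approximating by $z_n=\alpha(t_n)w_n+\beta(t_n)$, extracting $w_n\to w^*$) only yields $F\subset Q\cap(\alpha C_\infty+\beta)$. The reverse inclusion is what the paper obtains by asserting $Q\cap(\alpha_{j(n)}C_\infty+\beta_{j(n)})\hdarrow Q\cap(\alpha C_\infty+\beta)$; you need this too. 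Concretely: given $z=\alpha w+\beta\in Q$ with $w\in C_\infty$, set $z_n=\alpha(t_n)w+\beta(t_n)\to z$, argue that $z_n\in Q$ for large $n$ (trivial when $z$ lies in the interior of $Q$; boundary points then follow because $C_\infty$ is perfect, so $z$ is approximated from the interior), and conclude $z\in F$.

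Two smaller points. First, ``$p=0$ forces $y=0$'' is not quite right: $y$ might sit in some first-level cylinder $f_{n',m'}(Q)$ with $(n',m')\neq i_1$ rather than equal $0$; but either way $\|y-x\|_{\max}$ is bounded below by a fixed positive constant, so the conclusion survives. Second, your separation estimate assumes $y$ lies in a sibling cylinder indexed by $I$; you should also cover the degenerate sibling $f_{\iin{p}}(\{0\})$ coming from $f_{0,1}$. Here $\|y-x\|_{\max}=r_{\iin{p}}\|\pi\sigma^p\ii\|_{\max}\ge r_{\iin{p}}\cdot c_N$ for a constant $c_N>0$ depending only on $\min_{n\le N}a_n$, which again dominates $r_{\iin{p+1}}$ because $\ii\in\IN_N$.
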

\begin{proof}
 Let $\ii\in\IN_N$ and let $T_{x,t_n}(C_\infty)\hdarrow K$. Denote the smallest contraction ratio of the mappings $f_{n,m}, n=1,\dots N$ by $\underline{c}(N)$. For all $t>0$ let $k(t)$ be so that $\diam(\pi[\iin{k(t)}])\leq t < \diam(\pi[\iin{k(t)-1}])$. By our construction and choice of $\eps_n$, we have that
 \[
  \diam(\pi[\iin{n}])\leq\dist(\pi[\iin{n}],\pi[\jjn{n}])
 \]
 for all different $\ii,\jj\in\IN$ and $n\in\N$. Thus
 \[
  C_\infty\cap Q(x,t)=f_{\iin{k(t)-1}}(C_\infty) \cap Q(x,t) =\pi[\iin{k(t)-1}] \cap Q(x,t).
 \]
 Since $\diam(\pi[\iin{k(t)}])\leq t < \diam(\pi[\iin{k(t)-1}])\leq \diam(\pi[\iin{k(t)}])\underline{c}(N)^{-1}$ we now have, by self-similarity, that $T_{x,t_n}(C_\infty)=Q\cap (\alpha_n C_{\infty}+\beta_n)$, where  $1\leq\alpha_n\leq\underline{c}(N)^{-1}$ and $\beta_n\in[-\alpha_n,\alpha_n]^d$.
 
 Let $(j(n))_{n=1}^\infty$ be a sequence so that both $\alpha_{j(n)}$ and $\beta_{j(n)}$ converge. Call the limits $\alpha$ and $\beta$ respectively. Now it is clear that
 \[
  T_{x,t_{j(n)}}(C_\infty)=Q\cap (\alpha_{j(n)}C_\infty+\beta_{j(n)})
  \hdarrow Q\cap (\alpha C_\infty+\beta)
 \]
 Since $T_{x,t_n}(C_\infty)$ converges to $K$, the limit is the same for all sub-sequences and thus we have proved the claim.
\end{proof}
\begin{remark}
\label{caseremark}
 It is a bit technical to say exactly which collections of $\alpha$ and $\beta$ are needed to get $\Tan(C_\infty,x)= \{Q\cap(\alpha C_\infty+\beta)\}_{\alpha, \beta}$. If $x=f_{\ii}(0)$ and $|\ii|=N$, then $[x-t,x+t]^d$ intersects infinitely many cylinders of level $N$ for all $t>0$ and the above proof does not work. Also, if $\ii\in\IN\setminus\bigcup_{n=1}^\infty\IN_n$ then the sequence $\alpha_n$ is not bounded in general and so the above proof does not work.
\end{remark}

\section{Geometry and dimension}

In this section, we give a closer study to the geometric properties of locally rich sets in Euclidean spaces. We also study different dimensions of such sets.
\subsection{Tangent sets and dimension}
We show that using the construction of $C_\infty$, one can obtain a set of any given Hausdorff dimension that is locally rich in a dense subset. We begin by constructing a variant of $C_\infty$ so that it will have Hausdorff dimension zero. We start with the mappings $\{f_{n,m}\}$ that were used to create the set $C_1$. As has been showed, iterating these mappings gives a set with Hausdorff dimension strictly greater than zero. To make the Hausdorff dimension smaller, we only need to scale the mappings at each level. To this end, fix the sequence $(\alpha_k)_{k=1}^\infty$ with $\alpha_k=2^{-k}$, choose the sequence $(\eps_n)$ in the construction of the mappings $f_{n,m}$ so that $r_{n}\leq (2\#\gamma_n)^{-n}$, and for any $t>0$ let $k(t)\in\N$ be such that $1-tk(t)<-1$. For any $t>0$, we now have
\begin{align}
\label{finitesum}
 \sum_{\ii\in I} r_\ii^t
 &= \sum_{i=1}^{\infty}\# \gamma_i (r_i)^t \\
 &\leq \sum_{i=1}^{\infty} 2^{-it} (\# \gamma_i)^{1-it} \nonumber\\
 &\leq c'(t)+ \sum_{i=k(t)}^{\infty}2^{-it} \nonumber\\
 &= c(t)<\infty. \nonumber
\end{align}
Recall the notations from \eqref{ifs} - \eqref{symbol}. We define the collections of mappings $\FF_k$ by setting $\FF_k=\{\widetilde{f}_{n,m}: \widetilde{f}_{n,m}(x)= \alpha_k\lambda_n \widetilde{\delta}_{n} \eps_n x+\xi_{n,m} \}_{ (n,m)\in I} \cup \{f_{0,1}\}$ and mappings $\phi_k(A)= \cup_{\phi \in \FF_k}\phi(A)$ for $A \in \KK$ and $k\in\N$. Note that again $\phi_k(A)$ is compact for every $A \in \KK$ and $k\in \N$. Let 
\[
K_k:=\phi_1\circ\cdots \circ\phi_k(Q).
\]
Each $K_k$ is compact, and $K_k\subset K_{k-1}$ for all $k\geq 2$, so we get a non-empty compact set $ K_{\infty}=\bigcap_{k=0}^{\infty}K_k $. Note that $K_\infty$ is not a limit set of self similar IFS since we scaled the mappings at each level $k$ by a factor $\alpha_k$, and $\alpha_k\to 0$. By our choice of sequence $(\alpha_k)$ and $(\epsilon_k)$, and applying  Theorem \ref{thmC00}, we have that $K_\infty$ has the same tangent spaces as $C_\infty$ at the points of $\mathcal{S}$ defined in \eqref{Sdef}. It remains to show that $\dimh K_\infty=0$, as we claimed.
\begin{theorem}
 The set $K_\infty$ has Hausdorff dimension zero.
\end{theorem}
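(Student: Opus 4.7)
The plan is to exploit the natural hierarchical cover of $K_\infty$ coming from the iterated construction. For every $k$, since $K_\infty\subset K_k=\phi_1\circ\cdots\circ\phi_k(Q)$, the collection
\[
 \mathcal{U}_k:=\{g_1\circ\cdots\circ g_k(Q):g_j\in\FF_j\}
\]
covers $K_\infty$. Each composition $g_1\circ\cdots\circ g_k$ is a similitude with contraction ratio $\prod_{j=1}^k r_{g_j}$; when $g_j\in\FF_j$ corresponds to $(n_j,m_j)\in I$, this factor equals $\alpha_j\lambda_{n_j}\widetilde{\delta}_{n_j}\eps_{n_j}$, and when $g_j=f_{0,1}$ the image collapses to a single point, contributing $0$ to every $t$-sum for $t>0$. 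So for Hausdorff purposes we may restrict attention to compositions whose factors all come from the non-trivial part of $\FF_j$.

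Fix an arbitrary $t>0$. The plan is then to bound
\[
 \sum_{U\in\mathcal{U}_k}(\diam U)^t\ \leq\ \diam(Q)^t\prod_{j=1}^k\Bigl(\sum_{g\in\FF_j}r_g^t\Bigr)\ =\ \diam(Q)^t\prod_{j=1}^k\alpha_j^t c(t),
\]
using that the sum over compositions factors level by level, and invoking \eqref{finitesum} at each level (the constant $c(t)<\infty$ from \eqref{finitesum} does \emph{not} depend on $j$, since the only level-dependence of $\FF_j$ is the overall scaling $\alpha_j$). Substituting $\alpha_j=2^{-j}$ turns this into
\[
 \diam(Q)^t\cdot c(t)^k\cdot 2^{-tk(k+1)/2},
\]
which tends to $0$ as $k\to\infty$ because the super-exponential decay $2^{-tk(k+1)/2}$ overwhelms the exponential factor $c(t)^k$ for any fixed $t>0$.

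The remaining check is that the pieces in $\mathcal{U}_k$ have uniformly vanishing diameter, so that the above estimate actually controls $\mathcal{H}^t_\delta(K_\infty)$ for every prescribed $\delta>0$. This is routine: the contractions $r_n=\lambda_n\widetilde{\delta}_n\eps_n$ are uniformly bounded by some constant $M$, so $\diam(g_1\circ\cdots\circ g_k(Q))\leq\diam(Q)\cdot M^k\cdot 2^{-k(k+1)/2}\to 0$ as $k\to\infty$. Combining the two observations yields $\mathcal{H}^t(K_\infty)=0$ for every $t>0$, hence $\dimh K_\infty=0$. There is no real obstacle once the quantitative accounting is in place: the whole point of scaling the level-$k$ mappings by the extra factor $\alpha_k\to 0$ (rather than by a constant) is to convert the uniform bound $\sum_{i\in I}r_i^t\leq c(t)$ into level sums that decay super-exponentially, which is precisely what is needed to kill every positive-dimensional Hausdorff measure.
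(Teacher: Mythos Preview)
Your argument is correct and follows essentially the same approach as the paper: cover $K_\infty$ by the level-$k$ pieces, factor $\sum(\diam)^t$ as a product over levels, bound each level sum by $\alpha_j^t\,c(t)$ via \eqref{finitesum}, and use that $\prod_{j=1}^k\alpha_j^t=2^{-tk(k+1)/2}$ beats $c(t)^k$. The only cosmetic slip is that the step $\prod_{j=1}^k\bigl(\sum_{g\in\FF_j}r_g^t\bigr)=\prod_{j=1}^k\alpha_j^t c(t)$ should be an inequality rather than an equality, since \eqref{finitesum} is an upper bound; this has no effect on the argument.
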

\begin{proof}
 It suffices to show that $\HH^t(K_\infty) = 0$ for all $t>0$. So fix $t>0$ and consider the covering of $K_\infty$ by $K_k$. Let $\delta(k)=\max\{r_\ii:\ii\in I^k\}$. Now using \eqref{finitesum} gives
 \begin{align*}
  \HH^{t}_{2 \sqrt{d} \delta(k)}(K_\infty)
  &\leq (2 \sqrt{d})^t \sum_{\ii\in I^k} (\alpha_\ii r_\ii)^t \\
  &= (2 \sqrt{d})^t (\prod_{i=1}^k \alpha_i)^t (\sum_{\ii\in I} r_\ii^t)^k\\
  &= (2 \sqrt{d})^t 2^{-\frac{1}{2}(k-1)(k)t} (c(t))^k \\
  &= (2 \sqrt{d})^t 2^{-\frac{1}{2}(k-1)(k)t+k\log_2 c(t)} \to 0
 \end{align*}
 as $k$ increases.
\end{proof}
\begin{corollary}
\label{Whitneys}
For every $s \in [0, d]$, there exist compact set $X$ with Hausdorff dimension $s$, and  countable dense subset $E \subset X$, such that $\Tan(X, x) =\KK_0^+$ for all $x\in E$.
\end{corollary}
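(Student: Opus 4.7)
The plan is to construct $X$ of the prescribed Hausdorff dimension $s$ by adjusting the parameters in the construction of $K_\infty$ above, while preserving the structural feature that forces $\Tan(X,x)=\KK_0^+$ at the dense set of finitely generated points.

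For $s=0$ take $X=K_\infty$ and let $E=\{\widetilde{f}_\ii(0):\ii\in I^*\}$ be the analogue, inside $K_\infty$, of the set $S$ defined in \eqref{Sdef}. The previous theorem gives $\dim_H X=0$, the paragraph directly before this corollary (combined with Theorems \ref{thmC00} and \ref{case1}) gives $\Tan(X,x)=\KK_0^+$ for every $x\in E$, and $E$ is dense in $X$ by the same argument that established the density of $S$ in $C_\infty$ in Theorem \ref{case1}.

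For $s\in(0,d]$ the plan is to rerun the $K_\infty$ construction with the level-scalings $(\alpha_k)$ chosen larger, so that the resulting compact set $X=\bigcap_k\phi_1\circ\cdots\circ\phi_k(Q)$ has $\dim_H X=s$. The local structure at each $x\in E$ is unaffected by this change: it is still a scaled copy of the attractor $C_0$ at the origin, with the pieces $\widetilde\gamma_n$ accumulating in the $+x_1$-direction, and hence Theorem \ref{thmC00} combined with the proof of Theorem \ref{case1} still yields $\Tan(X,x)=\KK_0^+$ for every $x$ in the dense countable set $E$. To cover every $s$ up to $d$, I would first, if necessary, enrich the dense collection $\{\gamma_n\}\subset\KK_0^+$ by taking $\#\gamma_n$ large enough so that the natural (unscaled) dimension of the underlying infinite IFS already reaches $d$; this is compatible with the density requirement on $\{\gamma_n\}$.

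The hard part will be realising $\dim_H X=s$ on the nose. The upper bound $\dim_H X\le s$ should come from a covering estimate of exactly the shape of the proof of $\dim_H K_\infty=0$, with the factor $(\prod_i\alpha_i)^t(\sum_i r_i^t)^k$ rebalanced to vanish precisely when $t>s$. For the matching lower bound $\dim_H X\ge s$, the plan is to push a Bernoulli measure from the symbolic coding space to $X$ (as sketched in Remark \ref{minkowskiremark}), bound its lower local dimension from below at every $x=\pi\ii\in X$ by comparing $\mu B(x,r)$ with $r^s$, and invoke the mass-distribution principle. The delicate technical step is choosing $(\alpha_k)$ diagonally so that these two estimates meet exactly at the target value $s$, balancing the decay of $\alpha_k$ against the growth of $\#\gamma_n$ and the contraction ratios $r_n$.
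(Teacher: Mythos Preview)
Your approach differs substantially from the paper's. Rather than tuning the parameters of the $K_\infty$ construction, the paper takes an arbitrary compact $F\in\KK_0^+$ with $\dimh F=s$ and empty interior (such sets exist for every $s\in[0,d]$), applies the Whitney decomposition to $\R^d\setminus F$ to obtain cubes $Q_n$ with centres $x_n$ satisfying $\diam Q_n\le\dist(Q_n,F)\le 4\diam Q_n$, and sets
\[
X=F\cup\bigcup_n\Bigl(x_n+\tfrac15\diam(Q_n)\,K_\infty\Bigr).
\]
Since each attached copy of $K_\infty$ has Hausdorff dimension $0$, countable stability of $\dimh$ gives $\dimh X=s$ immediately, with no covering or mass-distribution estimate needed. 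The rich tangent structure at each centre $x_n$ is inherited from the local copy of $K_\infty$, and the centres accumulate on $F$ by the Whitney geometry.

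The advantage of the paper's route is that it completely decouples the two requirements: the target dimension is carried by $F$ alone, the tangent richness by the glued zero-dimensional copies, and every $s\in[0,d]$ is reached by the same one-line construction. Your plan, by contrast, must realise $\dimh X=s$ exactly by tuning $(\alpha_k)$ against the constraints on $(\lambda_n,\widetilde\delta_n,\eps_n)$. The lower bound you sketch via a Bernoulli measure is genuinely delicate for this non-self-similar Moran-type set, and pushing the dimension up towards $d$ is in real tension with the structural requirements of the construction: you need $\eps_n\to 0$ for $\Tan(C_1,0)=\Tan(C_0,0)$ to hold, and condition~(2) in the definition of $C_0$ forces $\lambda_n$ to decay faster than any polynomial, so the product $\#\gamma_n\,r_n^t$ is being squeezed hard. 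These obstacles may well be surmountable with enough care, but the Whitney-gluing trick sidesteps them entirely.
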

\begin{proof}
It is well known that for every $s \in [0, d]$, there exist compact $F \in \KK_0^+$ with the Hausdorff dimension $s$ and without interior points, see \cite[Section 4.12]{Mattila1995}. We apply the Whitney's decomposition for the open set  $\mathbb{R}^d\setminus F$. There exists cubes $\{ Q_n \}$ such that $\mathbb{R}^d\setminus F = \bigcup^{\infty}_{n= 1} Q_n $, the interior of $Q_i$ and interior of $Q_j$ are disjoint for $i\neq j$, and
\begin{align*}
\diam (Q_n) \leq \dist(Q_n, F)\leq 4\diam(Q_n)
\end{align*} 
for all $n\in \N$. Denote the center point of $Q_n$ by $x_n$ and set $C=\bigcup^{\infty}_{n=1} \{x_n\}$. Choose a large closed ball $B$, such that it contains $F$, and let
$E = B \cap C$. It is clear that 
$F\subset \overline{E}$.
By setting 
\begin{align*}
X= F\cup \bigcup_{x\in C} (x+\frac{1}{5} \diam(Q_x)K_{\infty}).
\end{align*}
we have that $X$ and $E$ satisfy the given conditions.
\end{proof}
Like we mentioned in the end of Section \ref{locricmet}, even the fact that one can tangent out $\{0\}$ from all points of a set $E$, does not give any information on the Hausdorff dimension of $E$. See Theorem \ref{mattilacorollary} below.

\begin{theorem}
\label{mattilacorollary}
For any $s \in [0,d]$, there is a compact set so that it has Hausdorff dimension $s$ and one can tangent out $\{0\}$ at all its points.
\end{theorem}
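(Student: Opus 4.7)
For $s = 0$, the compact set $E = \{0\} \cup \{2^{-n^{2}}e_1 : n \in \N\} \subset Q$, with $e_1$ the first standard basis vector, satisfies $\dimh E = 0$. At every isolated point of $E$ a small enough zoom returns the singleton $\{0\}$, and at $0$ the scales $t_n = 2^{-n^{2}-1}$ give $T_{0, t_n}(E) \subset \{0\} \cup \{\alpha e_1 : 0 < \alpha \le 2^{-2n}\}$, so $T_{0, t_n}(E) \hdarrow \{0\}$. For the boundary case $s = d$ I would instead take a disjoint union $\bigcup_n A_n$ together with an accumulation point, where each $A_n$ is a scaled copy of the Moran set constructed below with dimension $d - 1/n$, placed in a small region of diameter $2^{-n^{2}}$ at distance $\sim 2^{-n}$ from a fixed accumulation point $x_\infty$. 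Countable stability of $\dimh$ gives $\dimh = d$, while the internal sparse scales of each $A_n$ and the scales $t_n = 2^{-n}$ at $x_\infty$ produce $\{0\}$ as a tangent at every point.

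For the principal case $s \in (0, d)$, the plan is a Moran-type nested construction in $Q$ that alternates between ``regular'' and ``sparse'' levels. Fix an integer $N \geq 2$ and $\lambda \in (0, 1/2)$ with $N\lambda^{s} = 1$, which is possible since $s < d$; then fix a rapidly increasing sequence of positive integers $n_1 < n_2 < \cdots$ and $\mu_k \searrow 0$. Build nested compact sets $F_0 = Q \supset F_1 \supset \cdots$ by the rule: at a regular step $n \notin \{n_k\}_k$, replace each cube $C$ of $F_{n-1}$ by $N$ disjoint, uniformly well-separated sub-cubes of side $\lambda$ times the side of $C$, in a fixed self-similar pattern; at a sparse step $n = n_k$, replace each cube $C$ by the single concentric sub-cube of side $\mu_k$ times the side of $C$. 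Let $F = \bigcap_n F_n$.

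For the tangent property at an arbitrary $x \in F$, fix $k$ and let $C_{n_k}$ denote the cube of $F_{n_k}$ containing $x$, with side $\ell_{n_k}$. Then $x$ lies in the single concentric sub-cube $C_{n_k+1}$ of side $\mu_k \ell_{n_k}$, and therefore $x$ is at distance at least $(1-\mu_k)\ell_{n_k}/2$ from the boundary of $C_{n_k}$. Taking $t_k = \sqrt{\mu_k}\,\ell_{n_k}$, for $\mu_k$ small enough $Q(x, t_k) \subset C_{n_k}$; since $F \cap C_{n_k} \subset C_{n_k+1}$ by construction, this yields $F \cap Q(x, t_k) \subset C_{n_k+1}$. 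Hence $T_{x, t_k}(F)$ is contained in a ball of radius $\sqrt{d}\sqrt{\mu_k}$ around the origin, and $T_{x, t_k}(F) \hdarrow \{0\}$ as $k \to \infty$.

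For the Hausdorff dimension, the cubes at level $n$ have side $\ell_n \asymp \lambda^{n - k(n)} \prod_{j \leq k(n)} \mu_j$ with $k(n) = \#\{j : n_j \leq n\}$, and there are $N^{n - k(n)}$ such cubes. Choosing the $n_k$ and $\mu_k$ so that $\sum_{j=1}^{k} \log(1/\mu_j) = o(n)$ uniformly for $n \in [n_k, n_{k+1})$ as $k \to \infty$ (for instance, $\log(1/\mu_k) \leq k$ with $n_{k+1} \gg k^{2}$), the cover of $F$ by level-$n$ cubes gives $\dimh F \leq \ydimm F \leq s$. For the lower bound, the plan is to apply the mass distribution principle to the natural Bernoulli-type measure $\nu$ on $F$ that assigns mass $N^{-(n - k(n))}$ to each level-$n$ cube, verifying $\nu(B(x, r)) \leq C r^{s}$ for every $x \in F$ and $r > 0$. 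The main obstacle is precisely this lower bound: the sparse levels introduce abrupt gaps in scale, so one must uniformly control $\nu(B(x, r))$ at scales $r$ lying between the diameter of a sparse cylinder and that of its parent, where the cube count, the cube side, and the assigned mass all change by large factors simultaneously, and ensure that the sparsity of $(n_k)$ dominates the cumulative effect of the $(\mu_k)$.
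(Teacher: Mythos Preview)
Your construction for $s\in(0,d)$ --- a Moran set with regular self-similar levels interspersed with rare ``sparse'' levels --- is a natural alternative to the paper's route. The paper instead builds a one-dimensional homogeneous Cantor set $E(\{k\},\{k^{-1/s}\})$, reads off its dimension from the Feng--Rao--Wu formula, takes $d$-fold products to reach any $s\in(0,d)$, and for $s=d$ glues pieces of dimension $s_n\nearrow d$ along a sparse sequence. Your tangent argument at the sparse levels is correct (modulo a small index shift: with your convention the concentric sub-cube is $C_{n_k}$, the child of $C_{n_k-1}$, rather than $C_{n_k+1}$), and the upper bound $\dimh F\le s$ follows from the natural cover.

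There are two genuine gaps. First, the mass-distribution inequality $\nu(B(x,r))\le C r^{s}$ that you aim for is in fact impossible: the level-$n$ cover has total $s$-content $N^{\,n-k(n)}(\sqrt{d}\,\ell_n)^{s}\asymp\bigl(\prod_{j\le k(n)}\mu_j\bigr)^{s}\to 0$, so $\HH^{s}(F)=0$ and no nontrivial measure supported on $F$ can be $s$-Frostman. The correct target is $\nu(B(x,r))\le C_\varepsilon r^{s-\varepsilon}$ for every $\varepsilon>0$; your uniform condition $\sum_{j\le k}\log(1/\mu_j)=o(n)$ is exactly what makes this hold, but you must state and verify that weaker bound (or invoke a Moran dimension formula, as the paper does, and sidestep the issue). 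Second, in your $s=d$ paragraph the pieces $A_n$ sit at distances $\sim 2^{-n}$ from $x_\infty$, so at scale $t_n=2^{-n}$ the sets $A_{n},A_{n+1},A_{n+2},\ldots$ appear at normalized distances roughly $1,\tfrac12,\tfrac14,\ldots$ and $T_{x_\infty,t_n}(E)$ does not converge to $\{0\}$. You need super-geometric spacing of the pieces (the paper uses distances of order $2^{-n^{2}}$) so that between consecutive pieces there is a scale at which everything still visible collapses to the origin.
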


For the proof, we briefly give the construction of homogeneous Cantor sets. For more details, see e.g. \cite[Chapter 4]{Mattila1995}, \cite[Chapter 4]{Falconer1990} or \cite{FengRaoWu1997}. Let $\{m_k\}^{\infty}_{k=1} \subset \N $ with $m_k\geq 2$ for all $ k\in \N$ and $\{\lambda_k\}^{\infty}_{k=1} \subset (0,1)$ with the property $m_k \lambda_k <1$ for all $k\in \N$. 
Let $E_0$ be the unit interval $[0,1]$.  For interval $[0,1]$ the  $1$ th level intervals $I_1, \cdots , I_{m_1}$
contained in $[0,1]$ are of equal length $\lambda_1$ and equally spaced 
with the left-hand ends of $I_1$ and $[0,1]$ coinciding, and the right-hand ends of $I_{m_1}$ 
and $[0,1]$ coinciding.  Given $E_k$, a collection of $\prod^{k}_{i=1} m_i$ disjoint interval with equal length $\prod^{k}_{i=1} \lambda_i$. For each $k$ th level interval $I$, the $(k + 1)$ th level intervals $I_1, . . . , I_{m_{k+1}}$
contained in $I$ are of equal length $\prod^{k+1}_{i=1} \lambda_i$ and equally spaced, 
with the left-hand ends of $I_1$ and $I$ coinciding, and the right-hand ends of $I_{m+1}$ and $I$ coinciding. We define the limit set of this construction by 
\[
E(\{m_k\}, \{\lambda_k\}) = \bigcap^{\infty}_{k=1} E_k.
\]
For the Hausdorff dimension of $E(\{m_k\}, \{\lambda_k\})$, we have that
\begin{equation}\label{s}
\dimh E(\{m_k\}, \{\lambda_k\}) =\liminf_{k\rightarrow\infty} \frac{\log\prod^{k}_{i=1} m_i }{-\log\prod^{k}_{i=1} \lambda_i},
\end{equation}
see \cite[Theorem 2]{FengRaoWu1997}.
\begin{proof}[Proof of Theorem \ref{mattilacorollary}]
For the case $s =0$,  we chose any finite union of points as our $E$. Let us continue in one dimension.
Let $0<s<1$, $m_k =k$ and  $\lambda_k = k^{-\frac{1}{s}}$ for  $k \in \N.$ By \eqref{s}, we have that $\dimh E(\{m_k\}, \{\lambda_k\})=s$.  
By a geometrical observation, for large $k$ we have
\begin{equation}\label{e}
\frac{\prod^{k}_{i=1} \lambda_i}{\dist(I,J)} \leq \frac{k-1}{k^{\frac{1}{s}}-k} 
\end{equation}
for all different $k$ th interval $I$ and $J$.
Since $0<s<1$, we have that the right hand side of \eqref{e} goes to zero as $k\rightarrow\infty$. This implies that $\{0\} \in \Tan(E,x)$ for all $x\in E:=E(\{m_k\}, \{\lambda_k\})$.

Next, let $E=\prod^{d}_{i=1} E_i$, where $E_i=E(\{m_k\}, \{\lambda_k\})$ for all $i$.
For any point $x \in E$, there are $x_i \in E_i$, such that $x=(x_1,\cdots, x_d)$. 
Let $\epsilon >0$. By the structure of $E( \{m_k\}, \{\lambda_k\})$, there is positive $t_n$, such that $d_H (T_{x_i, t_n}(E_i), \{0\}) < \epsilon$ for all $i$. Thus  $d_H (T_{x, t_n}(E), \{0\}) < \epsilon \sqrt{d}$. By the arbitrary choice of $\epsilon$, we have that $\{0\}\in \Tan(E,x)$ for all $x\in E$. 
By a general product formula \cite[Product formula 7.2]{Falconer1990}, the number $sd$ is a lower bound for $\dimh E$. By using the natural covering of $E$, we get
\begin{align*}
 \HH^{ds}(E)
 & \leq \liminf_{n\to\infty} \sum_{i=1}^{\left( \prod_{k=1}^n m_k \right)^d} |\sqrt{d}\prod_{k=1}^n \lambda_k|^{ds} \\
 & = d^{\frac{ds}{2}} \liminf_{n\to\infty} \left( \prod_{k=1}^n m_k \right)^d |\prod_{k=1}^n \lambda_k|^{ds} \\
 & = d^{\frac{ds}{2}} \left( \liminf_{n\to\infty} n! ((n!)^{-\frac{1}{s}})^s \right)^d \\
 & = d^{\frac{ds}{2}},
\end{align*}
which gives the upper bound $\dimh E \leq sd$.

For the case $s=d$, let $0 <s_1<s_2 \cdots$ be an increasing sequence numbers with  $s_n \nearrow d$. By the above argument, we have that  for each $s_n$,  there is a set $E_n$ with $\dim_H E_n = s_n$ and $\{0\}$ is a tangent set at all points of each $E_n$. Let 
\[
E= \{0\} \cup \bigcup^\infty_{n=1} (2^{-n^{2}}+n 2^{-n^{2}}E_n).
\]
We have that  $\dim_H E = \sup_{n\geq1} \{ \dim_H E_n\}=d$. For the point zero, applying the same argument as in Theorem \ref{thmC00}, gives that  $\{0\}\in \Tan(E,0)$. For other points $x \in E$, this follows by our choice of $E_n$, and the sequences $2^{-n^{2}}$ and $n2^{-n^{2}}$. Thus we have completed the proof.
\end{proof}

\begin{example}\label{density}
Let $E \in \KK$ and $x$ be a Lebesgue density point of $E$, then $\Tan(E, x) = Q$.
\begin{proof}
Denote $r_E = \sup \{ r : B(x,r) \subset Q\setminus E\}$, by elementary geometry we see that $d_H( E, Q) = r_E$. Denote by $\mathcal{L}$ the Lebesgue measure on $\R^d$. It's not hard to see that $\frac{\mathcal{L}(E\cap B(x,r))}{\alpha(d)r^d} \rightarrow 1$ if and only if $\frac{\mathcal{L}(E\cap Q(x,r))}{(2r)^d} \rightarrow 1$ where $\alpha(d)$ is the measure of the unit ball and $Q(x,r)$ means the cube with center $x$ and side-length $2r$.

Let $t_n \searrow 0$.  Set $E_n := T_{x, t_n}(E)$, we have $\mathcal{L}(E_n) = \mathcal{L}(E\cap Q(x, t_n))t_n ^{-d}$. If $x$ is the density point of $E$, then $\mathcal{L}(E_n) \rightarrow 2^d=\mathcal{L}(Q)$. It means that $r_{E_n} \rightarrow 0$. By the above result, we conclude that $E_n \hdarrow Q.$ By the arbitrary choice of $t_n$, we have completed the proof.
\end{proof}
\end{example}

Notice that  the converse is not true in the above example. For example, let $E= \{0\}\cup\{\pm \frac{1}{n}\}_{n\geq 1}$, then $\Tan(E,0)= [-1, 1]$.

\begin{example}
Let $E\in \KK$ with $0<\HH^s(E)<\infty$. If $\{0\} \in \Tan(E,x)$ for  $\HH^s$ almost all $x\in E$, then $\underline{D}^s(E,x)=0$ for $\HH^s$ almost all $x \in E$. Here $\underline{D}^s(E,x)$ and $\overline{D}^s(E,x)$ mean the lower and upper $s$-densities of $E$ at $x$ respectively, see \cite[Chapter 6]{Mattila1995}.

Since $ \{0 \} \in \Tan(E,x)$, there exist a sequence $(r_n)$ decreasing to $0$, such that $d_H(T_{x,r_n}(E,x), \{0\})\rightarrow 0$. Thus for any $\epsilon>0$, there exist $N$, such that 
$d_H(T_{x,r_n}(E,x), \{0\})< \epsilon$ for all $n\geq N$. It means that $(B(x, r_n) \setminus B (x, \epsilon r_n))\cap E =\emptyset$, then
\[
 \frac{\mathcal{H}^s(E\cap B(x,r_n))}{ 2^{-s} r_n^s}
 =\frac{\mathcal{H}^s( E\cap B(x, \epsilon r_n) )}{ 2^{-s} (\epsilon r_n)^s}\epsilon^s.
\]
Let $n \rightarrow \infty$, then $\underline{D}^s(E,x)\leq \epsilon^{s}\overline{D}^s(E,x)$. Since $\overline{D}^s(E,x)\leq 1$ for $\mathcal{H}^s$ a.e. $x \in E$, the results follows. 
\end{example}

\subsection{Porosity of sets}
A subset $E$ of $\R^d$ is called porous,  if there exist $0<\alpha<1$, such that for each $x\in \R^d$, and $r>0$, the ball $B(x,r)$ contains a open ball $U(y, \alpha r)$ that does not meet $E$ (, meaning that $E \cap U(y, \alpha r) = \emptyset$).  In this case we also call $E$ $(\alpha)$-porous.  The concept of porosity is closely related to the Assouad dimension. The connection is the following: A subset $E$  of $ \mathbb{R}^d$ is porous if and only if $\dim_A E <d$.  For details, we refer to \cite[Theorem 5.2]{Luukkainen1998}.  

\begin{proposition}\label{porous}
Let $E$ be $(\alpha)$-  porous, then  every element of $\Tan(E,x)$ is also $(\alpha)$- porous for all $x \in E$.
\end{proposition}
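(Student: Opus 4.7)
The plan is threefold. First, I would observe that $\alpha$-porosity is preserved by the two operations that enter the definition of tangent sets: the affine rescaling $y\mapsto (y-x)/t$ and passing to a subset. Indeed, an empty open ball of radius $\alpha r$ inside $B(z,r)$ witnessing porosity of $E$ translates/rescales to an empty ball of radius $\alpha r/t$ in $B((z-x)/t,r/t)$ for $\hat{T}_{x,t}(E)=(E-x)/t$, and any open ball disjoint from $\hat{T}_{x,t}(E)$ is a fortiori disjoint from the smaller set $T_{x,t}(E)=\hat{T}_{x,t}(E)\cap Q$. Hence every set $T_{x,t_n}(E)$ appearing in the defining sequence of a tangent $F\in\Tan(E,x)$ is $\alpha$-porous.

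Second, and this is really the only substantial step, I would establish the Hausdorff stability of $\alpha$-porosity: if $F_n\hdarrow F$ in $\R^d$ and every $F_n$ is $\alpha$-porous, then $F$ is $\alpha$-porous. Fix $z\in\R^d$ and $r>0$. For each $n$, porosity provides $y_n$ with $U(y_n,\alpha r)\cap F_n=\emptyset$ and $U(y_n,\alpha r)\subset B(z,r)$; the latter inclusion forces $|y_n-z|\leq (1-\alpha)r$, so the $y_n$ lie in a compact set. Passing to a subsequence, $y_{n_k}\to y$ with $|y-z|\leq(1-\alpha)r$, which again gives $U(y,\alpha r)\subset B(z,r)$. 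To see that $U(y,\alpha r)\cap F=\emptyset$, suppose $f\in F\cap U(y,\alpha r)$ and write $|f-y|=\alpha r-\delta$ with $\delta>0$. Since $F_{n_k}\hdarrow F$, there exist $f_k\in F_{n_k}$ with $f_k\to f$, so $|f_k-y_{n_k}|\to|f-y|<\alpha r$, and for large $k$ we obtain $f_k\in U(y_{n_k},\alpha r)\cap F_{n_k}$, contradicting the porosity of $F_{n_k}$.

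Combining these two steps, every $F\in\Tan(E,x)$ is $\alpha$-porous. The main (minor) technicality is the bookkeeping between open and closed balls in the definition of porosity: the witnesses $y_n$ must be confined to the closed ball $B(z,(1-\alpha)r)$ so that a compactness argument produces a limit witness $y$ whose open sub-ball $U(y,\alpha r)$ still sits inside $B(z,r)$. Everything else is a direct application of the definitions.
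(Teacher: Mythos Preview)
Your proof is correct and follows essentially the same approach as the paper: show each $T_{x,t_n}(E)$ is $\alpha$-porous, then for a given test ball extract a convergent subsequence of porosity witnesses and verify the limit ball avoids $F$. The only cosmetic difference is that the paper parametrizes by $\epsilon\to 0$ and deduces $U(z_{N(\epsilon)},\alpha r-\epsilon)\cap F=\emptyset$ directly from $d_H(E_{N(\epsilon)},F)<\epsilon$ before passing to the limit, whereas you parametrize by $n$ and argue by contradiction via $f_k\to f$; the compactness step and the underlying idea are identical.
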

\begin{proof}
Let $x \in E$ and $F \in \Tan(E,x)$.  Then there exist a sequence $t_n \searrow 0$, such that $d_H(T_{x,t_n}(E), F) \rightarrow 0$. Denote $E_n:=T_{x,t_n}(E) , n \in \mathbb{N}$. Let $B(y,r)$ be a ball of $\R^d$. Then for every $\epsilon \in (0, \alpha r)$ there exists $N(\epsilon)$, such that  
\begin{equation}\label{key}
d_H( E_{N(\epsilon)}, F) < \epsilon. 
\end{equation}
By the porosity  of $E$ and a simple geometric observation, we have that $E_{N(\epsilon)}$ is $(\alpha)$- porous. Thus there is $z_{N(\epsilon)}\in\R^d$ so that $U(z_{N(\epsilon)}, \alpha r) \subset B(y,r)$ and $U(z_{N(\epsilon)}, \alpha r) \cap E_{N(\epsilon)} = \emptyset$. Together with \eqref{key} we have
\begin{equation}\label{haha}
U(z_{N(\epsilon)}, \alpha r-\epsilon ) \cap F = \emptyset. 
\end{equation}

Let $\epsilon \rightarrow 0$, then there is $z_{N(\epsilon)}$ such that \eqref{haha} holds. Note that  $z_{N(\epsilon)} \in B(y,r)$ for all $\epsilon \in (0, \alpha r)$. So there is a converging sub-sequence of $z_{N(\epsilon)}$ denoted by $z_{N(\epsilon_j)}$ with $\epsilon_j \searrow 0$ such that $z_{N(\epsilon_j)}\rightarrow z$. Applying  $\eqref{haha}$ to every $z_{N(\epsilon_j)}$, we conclude that $U(z, \alpha r) \cap F = \emptyset$. By the arbitrary choice of 
ball $B(y,r)$ and $x \in E$, we have completed the proof.
\end{proof}

By applying Proposition \ref{porous}, we know that a locally rich set can't be a porous set, and thus it has Assouad dimension $d$ (Here we apply the fact that:  A subset $E$ of $\mathbb{R}^d$ is porous if and only if $\dim_A E < d$ again).  We put this result as the following corollary. 

\begin{corollary}\label{locallyrichAd}
 Let $E \in \KK$ be a  locally rich set, then $\dim_A E = d$.
\end{corollary}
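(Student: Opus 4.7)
The plan is to argue by contradiction using Proposition \ref{porous} together with the cited characterization that a subset $E$ of $\R^d$ is porous if and only if $\dim_A E < d$. So I would suppose for contradiction that $\dim_A E < d$; then $E$ is $\alpha$-porous for some $\alpha \in (0,1)$, and by Proposition \ref{porous} every element of $\Tan(E,x)$ is $\alpha$-porous, for every $x \in E$.

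Next, I would exploit local richness to produce a tangent set with non-empty interior, which will contradict $\alpha$-porosity. Since $E$ is locally rich, $\Tan(E,x) \approx \KK$. In particular $\KK \lesssim \Tan(E,x)$, so applying this to the specific element $Q \in \KK$ furnishes a tangent set $F \in \Tan(E,x)$, numbers $a \in Q$, $b \in F$, and $\lambda \in (\lambda_0,\lambda_0^{-1})$ with $\lambda(Q - a) = F - b$. Thus $F$ is a translated and scaled copy of the cube $Q$, hence contains a ball of positive radius.

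From here, the contradiction is immediate: $F$ has non-empty interior, so for every open ball $B(y,r)$ whose center lies inside the interior of $F$ and whose radius is small enough that $B(y,r) \subset F$, no sub-ball $U(z, \alpha r) \subset B(y,r)$ can avoid $F$. This contradicts the $\alpha$-porosity of $F$ guaranteed by Proposition \ref{porous}. Therefore the initial assumption fails, and $\dim_A E = d$.

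I expect no real obstacle; the only mild care needed is ensuring that the contraction-and-translation relation $\KK \lesssim \Tan(E,x)$ genuinely produces a set isometric up to scaling to $Q$, which then unambiguously fails porosity. Everything else is an immediate combination of Proposition \ref{porous} and the porosity/Assouad dimension equivalence cited from \cite[Theorem 5.2]{Luukkainen1998}.
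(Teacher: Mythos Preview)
Your proposal is correct and is essentially the same as the paper's argument: both combine Proposition~\ref{porous} with the porosity/Assouad-dimension equivalence from \cite[Theorem 5.2]{Luukkainen1998}, observing that local richness forces a tangent set (a scaled cube) that fails to be porous. The paper's version is just more terse, stating only that a locally rich set ``can't be a porous set'' without spelling out the $Q$-with-nonempty-interior step that you make explicit.
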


\begin{definition} 
Let $E \subset \R^d$ and $x \in \R^d$. The local porosity of $E$ at $x$ at distance $r$ is 
\[
\por(E,x,r)=\sup \{ \alpha \geq 0: 
B(y, \alpha r) \subset B(x,r) \backslash E ~~\text{for some}~~ y \in \R^d\}.
\]
The upper and lower porosities of $E$ at $x$ are defined as 
\[
\overline{\por}(E,x) =\limsup_{r\rightarrow 0} \por(E,x,r) ~~\text{and }~~ \underline{\por}(E,x)=\liminf_{r\rightarrow 0} \por(E,x,r),
\]
respectively.
\end{definition}
For the background and more applications of porosity, we refer to \cite{Shmerkin2011survey}. By the definition, we have 
\[
0\leq \underline{\por}(E,x) \leq \overline{\por}(E,x) \leq \frac{1}{2},
\]
for any $x \in E$. Note that if $E$ is a $(\alpha)$-porous set of $\R^d$, then $\por(E,x,r) \geq \alpha$ for any $x \in \R^d$ and $r >0$.  Thus $\inf_{x\in E} \underline{\por}(E,x)\geq \alpha.$ We conclude that if $E$ is a porous set of $\R^d$, then $\inf_{x\in E} \underline{\por}(E,x)> 0$. The following simple example shows that the converse is not true.

\begin{example} Let $E = \{0\}\cup \{\frac{1}{n}\}_{n=1}^\infty$ be a subset of $\R$,  then 
\[\underline{\por}(E,x)=\overline{\por}(E,x)=\frac{1}{2} ~~\text{for all}  ~~x \in E,
\]
and $E$ is not porous in $\R$.
\end{example}

\begin{proposition}\label{d}
Let $E \in \KK$. If $\{0\} \in \Tan(E,x)$, then $\overline{\por}(E,x)=\frac{1}{2}$.
\end{proposition}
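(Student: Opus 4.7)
The plan is to prove the equality by combining the trivial universal upper bound $\overline{\por}(E,x) \leq 1/2$ with a matching lower bound extracted from the sequence along which $\{0\}$ is tangented out. The upper bound is immediate from $x\in E$: if $B(y,\alpha r) \subset B(x,r) \setminus E$ then $B(y,\alpha r)$ avoids the point $x$, which forces $|y-x| > \alpha r$, while inclusion in the closed ball $B(x,r)$ forces $|y-x| + \alpha r \leq r$; combining gives $\alpha < 1/2$. So the whole task is to show $\overline{\por}(E,x) \geq 1/2$.

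For the lower bound, I would start by using the hypothesis to fix $t_n \searrow 0$ with $T_{x,t_n}(E) \xrightarrow{d_H} \{0\}$, and set $\epsilon_n := d_H(T_{x,t_n}(E),\{0\})$. Since $x\in E$ gives $0 = \hat T_{x,t_n}(x) \in T_{x,t_n}(E)$, the convergence simply says $T_{x,t_n}(E) \subset \overline{B}(0,\epsilon_n)$ inside $Q$. Unfolding the definition of the zoom, this reads: for every $y\in E$ with $\|y-x\|_{\max} \leq t_n$, one has $|y-x| \leq \epsilon_n t_n$. The key geometric observation is that the Euclidean ball $B(x,t_n)$ is contained in the max-norm cube $x+t_n Q$, so
\[
E \cap B(x,t_n) \subset B(x,\epsilon_n t_n).
\]
This is the bridge that translates tangent information in $Q$-coordinates back to a constraint on $E$ near $x$ in the original metric, and I expect it to be the only step requiring any care.

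Having placed $E$ into a tiny ball inside the annulus $B(x,t_n)\setminus B(x,\epsilon_n t_n)$, I fit a large empty Euclidean ball into the annulus. For any unit vector $u \in \R^d$, set $y_n := x + \tfrac{1+\epsilon_n}{2}t_n\, u$ and let $\alpha < (1-\epsilon_n)/2$. Then $|y_n - x| + \alpha t_n < t_n$ yields $B(y_n,\alpha t_n) \subset B(x,t_n)$, while $|y_n-x| - \alpha t_n > \epsilon_n t_n$ yields $B(y_n,\alpha t_n) \cap B(x,\epsilon_n t_n) = \emptyset$; combined with the inclusion above this gives $B(y_n,\alpha t_n) \subset B(x,t_n) \setminus E$. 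Therefore $\por(E,x,t_n) \geq (1-\epsilon_n)/2$, and taking $n\to\infty$ along the sequence $t_n$ yields $\overline{\por}(E,x) \geq \limsup_n (1-\epsilon_n)/2 = 1/2$, which together with the universal upper bound completes the proof.
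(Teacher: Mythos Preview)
Your proof is correct and follows essentially the same approach as the paper's. Both arguments use the tangent sequence to conclude that $E\cap B(x,t_n)$ is contained in a tiny ball $B(x,\epsilon_n t_n)$ and then fit a ball of radius $\approx \tfrac{1-\epsilon_n}{2}\,t_n$ into the resulting annulus; the only cosmetic difference is that you work directly in the original coordinates and are explicit about the $\|\cdot\|_{\max}$ versus Euclidean norm conversion (via $B(x,t_n)\subset x+t_nQ$), whereas the paper finds the empty ball inside $B(0,1)\subset Q$ in zoomed coordinates and then pulls it back.
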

\begin{proof}
There exist $t_n \searrow 0$, such that  $d_H(T_{x,t_n}(E), \{0\}) \rightarrow 0$ as $n \rightarrow\infty$. For any $\epsilon \in (0, 1)$, there is $N$ such that for any $n \geq N$, we have 
\begin{equation}
d_H(T_{x,t_n}(E), \{0\})<\epsilon.
\end{equation}
Thus there is an open ball $U(z, \frac{1-\epsilon}{2}) \subset B(0,1)$ and $U(z, \frac{1-\epsilon}{2})\cap T_{x,t_n} (E) = \emptyset$. 
We also have $T_{x,t_n}^{-1}(U(z,\frac{1-\epsilon}{2})) \cap E = \emptyset$. It follows that $\por(E,x,t_n)\geq \frac{1-\epsilon}{2}.$
Let $\epsilon\rightarrow 0$, then $\overline{\por}(E,x)\geq\frac{1}{2}$. We complete the proof, since  the natural upper bound of $\overline{\por}(E,x)$ is $\frac{1}{2}$.
\end{proof}
As an implication of Theorem \ref{Baire} and Proposition \ref{d}, we have the following corollary. 

\begin{corollary} 
Typically a compact set of $\KK$ has upper porosity $\frac{1}{2}$  at all its points.
\end{corollary}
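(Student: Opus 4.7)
The plan is to chain together Theorem \ref{Baire} and Proposition \ref{d}, with the only real subtlety being to extract the literal singleton $\{0\}$ from the weaker $\approx$-relation afforded by local richness.

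First I would fix a typical compact set $E\in\KK$. By Theorem \ref{Baire}, $E$ is locally rich, so $\Tan(E,x)\approx\KK$ for every $x\in E$, which in particular yields $\KK\lesssim\Tan(E,x)$. Pick an arbitrary $x\in E$. Applying $\KK\lesssim\Tan(E,x)$ to the singleton set $\{0\}\in\KK$, the definition of $\lesssim$ produces $B\in\Tan(E,x)$, $\lambda_0<\lambda<\lambda_0^{-1}$, $a\in\{0\}$ and $b\in B$ with $\lambda(\{0\}-a)=B-b$. The left-hand side is $\{0\}$, so $B=\{b\}$ must be a singleton.

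Second, I would identify this singleton as $\{0\}$. Since $x\in E$, for every $t>0$ one has $\hat T_{x,t}(x)=0\in\hat T_{x,t}(E)\cap Q=T_{x,t}(E)$. Taking Hausdorff limits along any sequence $t_n\searrow 0$, the limit set inherits the property of containing $0$, since $\{0\}$ is closed. Hence every element of $\Tan(E,x)$ contains the origin; in particular $b=0$, giving $\{0\}\in\Tan(E,x)$. Proposition \ref{d} now yields $\overline{\por}(E,x)=\tfrac{1}{2}$. Since the trivial upper bound $\overline{\por}(E,x)\leq\tfrac{1}{2}$ always holds, we obtain equality at every $x\in E$.

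Finally, because the set of locally rich compact sets contains the residual set constructed in Theorem \ref{Baire}, the set of $E\in\KK$ enjoying $\overline{\por}(E,x)=\tfrac{1}{2}$ for all $x\in E$ is also residual, which is the meaning of ``typically''. The only step that requires any care is the second one; once one observes that every tangent set of $E$ at $x\in E$ automatically contains $0$, the $\approx$-relation instantly upgrades to the literal membership $\{0\}\in\Tan(E,x)$ needed by Proposition \ref{d}. No additional computation is required.
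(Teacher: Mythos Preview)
Your argument is correct and follows exactly the route the paper indicates, namely combining Theorem \ref{Baire} with Proposition \ref{d}. The paper states the corollary as a direct implication without spelling out details; you have correctly identified and filled in the one non-obvious step, showing that the $\approx$-relation from local richness upgrades to literal membership $\{0\}\in\Tan(E,x)$ because every tangent set at $x\in E$ automatically contains the origin.
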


A Borel regular measure $\mu$ on metric space $X$ is said to be \emph{doubling} if there is a constant $C \geq 1$ such that for any $x \in X$ and
$0<r<\infty$ we have
\begin{equation}
\label{doubling}
0 < \mu(B(x,2 r)\leq C\mu(B(x,r))<\infty.
\end{equation}
In this case we also say that $\mu$ is $C$-doubling. We denote by
$\mathcal{D}(X)$ the collection of non-trivial doubling measures on $X$. Let $E \subset X$, we say that $E$ is \emph{thin} if $E$ has zero measure for all doubling measures on $X$. It is well known that porous sets are thin, but the converse is not true, for more details see \cite{Wu1993}. In the following, we only consider $\mathcal{D}([0,1]^d)$.

It's not hard to see that by applying the density theorem and Proposition \ref{d} we have the following fact. For the convenience of reader, we  show the details here.

\begin{proposition}\label{thin}
Let $E \in \KK$ and $ \{0\} \in \Tan(E,x)$ for all $x \in E$, then $E$ is thin.
\end{proposition}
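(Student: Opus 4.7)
The plan is to argue by contradiction using the Lebesgue density theorem for doubling measures together with Proposition \ref{d}. Suppose some $\mu \in \mathcal{D}([0,1]^d)$ satisfies $\mu(E)>0$. Since $\mu$ is a $C$-doubling Borel regular measure on $[0,1]^d$, the Lebesgue differentiation theorem holds for $\mu$, so there exists $x\in E$ (in fact $\mu$-a.e.\ $x\in E$) such that
\[
\lim_{r\to 0}\frac{\mu(E\cap B(x,r))}{\mu(B(x,r))}=1.
\]

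Next I would use doubling to compare the $\mu$-mass of a small ball sitting inside $B(x,r)$ with that of $B(x,r)$ itself. If $B(y,\alpha r)\subset B(x,r)$ with $\alpha>0$, then $B(x,r)\subset B(y,2r)$ and iterating the doubling inequality $\lceil\log_2(2/\alpha)\rceil$ times gives a constant $c=c(\alpha,C)>0$ with
\[
\mu(B(y,\alpha r))\geq c\,\mu(B(x,r)).
\]

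Now I apply the hypothesis: since $\{0\}\in\Tan(E,x)$, Proposition \ref{d} gives $\overline{\por}(E,x)=\tfrac{1}{2}$. Hence there is a sequence $r_n\searrow 0$ and points $y_n$ with $B(y_n,\tfrac{1}{4}r_n)\subset B(x,r_n)\setminus E$ for all large $n$. Combining this with the doubling comparison above yields
\[
\mu(B(x,r_n)\setminus E)\geq \mu\bigl(B(y_n,\tfrac{1}{4}r_n)\bigr)\geq c(\tfrac{1}{4},C)\,\mu(B(x,r_n)),
\]
so $\mu(E\cap B(x,r_n))/\mu(B(x,r_n))\leq 1-c(\tfrac{1}{4},C)<1$, contradicting the choice of $x$ as a $\mu$-density point of $E$. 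Therefore $\mu(E)=0$ for every $\mu\in\mathcal{D}([0,1]^d)$, which is exactly thinness of $E$.

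The only non-routine point is invoking the Lebesgue differentiation theorem for an arbitrary doubling measure on $[0,1]^d$; this is standard (for instance via the Besicovitch covering theorem on $\R^d$ or the Vitali covering theorem which holds in any doubling metric measure space), so I would just cite it rather than reprove it. Everything else reduces to the uniform porosity statement extracted from Proposition \ref{d} and the elementary doubling estimate.
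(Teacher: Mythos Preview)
Your proof is correct and follows essentially the same route as the paper: both combine Proposition~\ref{d} with the density theorem for doubling measures and an elementary doubling estimate to show the density ratio cannot tend to $1$. The only cosmetic difference is that the paper exploits the stronger ``empty annulus'' information coming directly from $\{0\}\in\Tan(E,x)$ (so that $E\cap B(x,t_n)\subset B(x,\varepsilon t_n)$) to drive the ratio $\mu(E\cap B(x,t_n))/\mu(B(x,t_n))$ all the way to $0$, whereas you use only a single porosity hole to bound it away from $1$; either bound suffices once the density theorem is invoked.
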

\begin{proof}
Let $x \in E$. By proposition $\ref{d}$, we have $\overline{\por}(E,x)= \frac{1}{2}$. There exist a sequence $(t_n)$, $t_n \searrow 0$, such that $\lim_{n\rightarrow\infty}\por(E,x,t_n)\rightarrow \frac{1}{2}$. Thus for every $\epsilon > 0$, there exists an integer 
$N$, such that  for all $n\geq N$ we have  $E \cap (B(x, t_n)\setminus B(x, \epsilon t_n))= \emptyset$. 

Let $\mu \in \mathcal{D}(Q)$. By the above calculation, it is easy to see that for all $n\geq N$, 
\begin{equation}
\dfrac{\mu(E \cap B(x, t_n))}{\mu(B(x,t_n))}
\leq \dfrac{\mu(B(x, \epsilon t_n))}{\mu (B(x,t_n))} \leq C \epsilon^s,
\end{equation}
where positive constants $C$ and $s$ only depend on the measure $\mu$, see \cite[Chapter 13]{Heinonen2001}. By the above estimate we have that $\liminf_{r\rightarrow 0} \dfrac{\mu(E \cap B(x, r))}{\mu(B(x,r))} =0$ for all $x \in E$. Thus we have $\mu(E) = 0$ by applying the density point theorem for Radon measures, see \cite[Corollary 2.14]{Mattila1995}.
\end{proof}

By applying Proposition \ref{thin}, we have the following corollary.
\begin{corollary}\label{locallyrichisthin}
Let $E \in \KK$ be a locally rich set, then $E$ is thin.
\end{corollary}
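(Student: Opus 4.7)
The plan is to reduce the statement directly to Proposition \ref{thin}: once I establish that $\{0\}\in\Tan(E,x)$ for every $x\in E$, then Proposition \ref{thin} applies and $E$ is thin. So the whole problem becomes: produce $\{0\}$ as a tangent set at every point of a locally rich set.

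To achieve this, I would use the definition of local richness, which gives $\KK\lesssim\Tan(E,x)$ for every $x\in E$. I plan to apply the relation $\lesssim$ to the specific element $A=\{0\}\in\KK$. Unpacking the definition, this produces a set $B\in\Tan(E,x)$, a constant $\lambda\in(\lambda_0,\lambda_0^{-1})$, and vectors $a\in A$, $b\in B$ satisfying $\lambda(A-a)=B-b$. Since $A=\{0\}$ forces $a=0$ and $A-a=\{0\}$, the equation collapses to $B=\{b\}$, so $B$ must be a singleton.

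It remains to identify this singleton, and here I would use the trivial but essential fact that the origin lies in every tangent set of $E$ at $x\in E$: indeed $T_{x,t}(E)=\hat{T}_{x,t}(E)\cap Q$ contains $\hat{T}_{x,t}(x)=0$ whenever $x\in E$, and this property passes to Hausdorff limits. Consequently $0\in B=\{b\}$, so $b=0$ and $B=\{0\}$. This gives $\{0\}\in\Tan(E,x)$ for every $x\in E$, and Proposition \ref{thin} closes the proof. The argument is essentially a one-line reduction; the only observation needed is to test the $\lesssim$ relation on the singleton $\{0\}$, rather than on more elaborate compact sets, and to combine it with the universal presence of the origin in every Euclidean tangent.
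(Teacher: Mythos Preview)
Your argument is correct and follows exactly the route the paper takes: the paper simply states that the corollary follows by applying Proposition~\ref{thin}, leaving implicit the fact that $\{0\}\in\Tan(E,x)$ for every $x\in E$, which you have spelled out cleanly by testing the relation $\KK\lesssim\Tan(E,x)$ on the singleton $\{0\}$ and using that every Euclidean tangent set contains the origin.
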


\begin{corollary}\label{known}
By Theorem \ref{Baire} and Corollaries \ref{locallyrichAd} and \ref{locallyrichisthin}, we conclude that a typical set of $\KK$ has Assouad dimension $d$ and is thin.
\end{corollary}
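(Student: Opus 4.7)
The statement is essentially a three-line corollary assembled from results that have already been proved in this section, so my plan is to simply record the logical chain and check that no subtlety is hidden in the word ``typical''.

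First, I would invoke Theorem \ref{Baire}, which says that the collection
\[
 \mathcal{L} := \{E\in\KK : E \text{ is locally rich}\}
\]
is residual in $(\KK,d_H)$, i.e.\ its complement is of first category. Next I would observe that Corollary \ref{locallyrichAd} gives the inclusion
\[
 \mathcal{L} \subset \{E\in\KK : \dim_A E = d\},
\]
and Corollary \ref{locallyrichisthin} gives
\[
 \mathcal{L} \subset \{E\in\KK : E \text{ is thin}\}.
\]
Since a superset of a residual set is residual, both of the right-hand sets are residual in $\KK$.

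The only thing that one has to notice in addition is that the intersection of two residual subsets of a Baire space is again residual (because the union of two first category sets is of first category). Applying this to the two residual families above gives that
\[
 \mathcal{L} \subset \{E\in\KK : \dim_A E = d\}\cap\{E\in\KK : E \text{ is thin}\}
\]
is residual as well, which is exactly the statement of the corollary. I do not expect any real obstacle here, as the only substance beyond quoting the earlier results is the trivial remark about intersections of residual sets in a complete metric space (we already know $\KK$ is complete, being compact, so Baire's theorem applies).
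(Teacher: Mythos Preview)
Your proposal is correct and follows exactly the (implicit) argument of the paper: the corollary there is stated without a separate proof, the reasoning being precisely that Theorem \ref{Baire} gives a residual set of locally rich $E$, and Corollaries \ref{locallyrichAd} and \ref{locallyrichisthin} push this residual set into the two desired families. Your additional remark about intersections of residual sets is harmless but unnecessary, since $\mathcal{L}$ itself already sits inside the intersection.
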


\begin{remark}\label{Box}
Corollary \ref{known} follows implicitly from \cite{FengWu1997} in a different manner. In \cite{FengWu1997}, they proved that a typical set $E \in \KK$ has $\adimm (E)=0$ and $\ydimm(E)=d$. By the well known results that $\dim_H (E) \leq \adimm(E) \leq \ydimm (E) \leq \dim_A (E)$, we have the dimension part of our claim. For the thin part, we recall the following results: A subset of uniformly perfect metric space with Hausdorff dimension zero is thin \cite[Chapter 13]{Heinonen2001}.
\end{remark}

By Theorem \ref{Baire} and Remark \ref{Box}, we know that typically a compact set $X \subset \R^d$  is locally rich and has $\overline{\dim}_M X=d$. We end this section with the following question. 
\begin{question}
If $X \subset \R^d$ is locally rich, is it true that $\overline{\dim}_M X=d ?$
\end{question}

\section{Global structure of sets}

The converse to the local structure of sets is the global structure of sets. Here we briefly discuss, what can be the global structure of an unbounded closed set in Euclidean space. In other words, what can we see when zooming out? To be formal, we show the following definitions. We say that $F \in\KK$ is a photograph of a closed $E\subset\R^d$ at $x\in E$ if there exists a sequence $t_n\nearrow\infty$ so that
 \[
  T_{x,t_n} (E)\hdarrow F.
 \]
Denote  by $P(E,x)$  all the photographs of $E$ at $x$. We say that $E$ is \emph{globally rich} at $x\in E$ if $P(E,x)\approx\KK$. If this happens for all $x\in E$ then we just say that $E$ is \emph{globally rich}. Note that the similar concept in metric space was called Gromov-Hausdorff asymptotic cone, see \cite[p-276]{BugaroBugaroIvanov2001}.

Recall the subspace $\KK_0^+$ of $\KK$ and the choice of $(\gamma_i)_{i=1}^{\infty}$ at the beginning of Section \ref{Euclidean}. Fix sequences $a_n \nearrow \infty$ and $\lambda_n \nearrow \infty$ with the following properties
\begin{enumerate}
 \item $a_{n}+\lambda_{n}+1\leq a_{n+1}$ for all $n\in\N$
 \item $\frac{a_n}{a_n+\lambda_n} \leq \frac{1}{n}$ as $n\to\infty$
\end{enumerate}
For example, one could choose $a_n=2^{n^2}$ and $\lambda_n=n a_n$ for all $n\in\N$. We set $\widetilde{\gamma}_n= \vec{a}_n +\lambda_n\gamma_n$, where $\vec{a}_n=(a_n,0,0,\ldots)$ and let
\[
A:=\bigcup_{n=1}^\infty \widetilde{\gamma}_n.
\]
The set $A$ is made of the elements of $(\gamma_n)_{n=1}^{\infty}$ scaled with $\lambda_n$ and positioned so that the leftmost point of the $n$ th piece on the $x_1$-axis is $a_n$. The condition $(2)$ gives the separation of the pieces. Obviously $A$ is closed set of $\R^d$. By applying the same argument as in Theorem
\ref{thmC00} to this zooming out case, we have the following result. 
We omit the proof here, but note the result is the same for all points, since any two points get arbitrarily close when looking from far enough. This is a major difference to the zooming in case.
\begin{theorem}\label{photo}
 We have $P(A, x)= \KK_0^+$ for all $x \in A$.
\end{theorem}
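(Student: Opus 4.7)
The plan is to adapt the proof of Theorem~\ref{thmC00} to the zooming-out setting. The inclusion $P(A,x)\subset\KK_0^+$ is essentially free: each $T_{x,t}(A)$ contains $0=(x-x)/t$, lies in $Q$, and for any $z\in A$ with first coordinate $z^{(1)}\geq a_j>0$ the first coordinate of $(z-x)/t$ is bounded below by $-x_1/t$, which tends to $0$, so in any Hausdorff limit the first coordinate is nonnegative. For the reverse inclusion I would fix $x\in\widetilde{\gamma}_m$ and $K\in\KK_0^+$, and use density of $(\gamma_n)$ in $\KK_0^+$ to pick a subsequence with $\gamma_{n(k)}\hdarrow K$.

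The central step is the scale $t_k = a_{n(k)}+\lambda_{n(k)}-x_1$, which is positive and tends to infinity for $k$ with $n(k)>m$. I would decompose $T_{x,t_k}(A)=\bigcup_j\bigl((\widetilde{\gamma}_j-x)/t_k\bigr)\cap Q$ and analyze three ranges of indices. For $j\geq n(k)+1$, condition~(1) gives $a_j\geq a_{n(k)}+\lambda_{n(k)}+1=t_k+x_1+1$, so the first coordinate of the transformed piece is at least $(t_k+1)/t_k>1$ and the piece lies strictly outside $Q$. For $j=n(k)$, writing
\[
  \frac{\widetilde{\gamma}_{n(k)}-x}{t_k} = \frac{\vec{a}_{n(k)}-x}{t_k} + \frac{\lambda_{n(k)}}{t_k}\,\gamma_{n(k)},
\]
condition~(2) yields $\lambda_{n(k)}/t_k\to 1$ and $|\vec{a}_{n(k)}-x|/t_k\to 0$, so the continuity of Hausdorff limits under affine motions gives convergence to $K$, while a coordinate check using $x_1\leq a_{n(k)}$ shows this piece already lies in $Q$ for $k$ large. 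For $j\leq n(k)-1$, condition~(1) forces $\lambda_j\leq a_{n(k)}-1$ and an iterated application of~(2) gives $a_j\leq a_{n(k)}/(n(k)-1)$, so both the diameter and the position of the transformed piece collapse into a vanishing neighborhood of the origin. Since $0\in K$, combining the three cases yields $T_{x,t_k}(A)\hdarrow K$.

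The main obstacle is controlling the $(n(k){+}1)$-st piece. With the naïve choice $t_k=a_{n(k)}+\lambda_{n(k)}$, a base point with $x_1>1$ would allow a thin slice of $\widetilde{\gamma}_{n(k)+1}$ to intrude into $Q$; because $\lambda_{n(k)+1}/t_k\to\infty$, this slice is forced to concentrate near the single point $(1,0,\ldots,0)$, which need not belong to $K$, producing a spurious limit point. Shifting the scale by $-x_1$ consumes the unit gap guaranteed by condition~(1) to keep $\widetilde{\gamma}_{n(k)+1}$ beyond the first-coordinate face of $Q$ for every $x\in A$. This is the only substantive departure from the zoom-in argument of Theorem~\ref{thmC00} and reflects the authors' remark that, when looking from far away, the base point becomes asymptotically irrelevant.
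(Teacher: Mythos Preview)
Your proof is correct and follows essentially the approach the paper indicates: the paper omits the argument entirely, stating only that it is ``the same argument as in Theorem~\ref{thmC00}'' together with the remark that any two base points become asymptotically indistinguishable when zooming out. Your scale choice $t_k=a_{n(k)}+\lambda_{n(k)}-x_1$ is a clean way to make that remark rigorous, and your three-range decomposition is exactly the zoom-out analogue of the proof of Theorem~\ref{thmC00}; one minor point is that in your discussion of the naive scale, the intruding slice of $\widetilde{\gamma}_{n(k)+1}$ need not concentrate at the single point $(1,0,\dots,0)$ in general (since $\gamma_{n(k)+1}$ varies with $k$), but this is commentary and does not affect the validity of your actual argument.
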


Note that the set $A$ of Theorem \ref{photo} is a countable set of (different) points. By our earlier results we know that there exist locally rich sets. Now we are going to put the copy of a locally rich set of suitable size at every point of $A$, thus we can have one set such that it is not only locally rich, but also globally rich. 

\begin{corollary}
There exists a closed set  $ E \subset \R^d$ such that $P(E,x) \approx \KK$ and $Tan(E,x) \approx \KK$ for all $x \in E$.
\end{corollary}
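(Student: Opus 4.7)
The plan is to superimpose the local and global constructions: I would attach a small rescaled copy of the locally rich compact set $X$ from Theorem \ref{allpoints} at every point of the globally rich closed countable set $A$ from Theorem \ref{photo}. Enumerate $A=\{a_j\}_{j=1}^\infty$ and set $d_j:=\min_{i\neq j}|a_i-a_j|$, which is strictly positive because each layer $\widetilde{\gamma}_n$ of $A$ is finite, the layers are well separated and their translates diverge to infinity. Choose scales $s_j>0$ satisfying $s_j \diam(X)<d_j/4$ and $s_j\leq 2^{-j}$, and define
\[
 E:=\bigcup_{j=1}^\infty (a_j+s_j X).
\]
Since every bounded ball of $\R^d$ meets only finitely many of the compact pieces $a_j+s_j X$, the set $E$ is closed.

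For local richness, fix $x\in E$; by the separation $s_j\diam(X)<d_j/4$ the point $x$ lies in a unique piece, so $x=a_j+s_j y$ for a unique $j$ and a unique $y\in X$. The same buffer forces the zoom window $Q(x,t)$ to miss every other piece whenever $t<d_j/4$, so $T_{x,t}(E)=T_{y,t/s_j}(X)$ for all such $t$. Letting $t\searrow 0$ identifies $\Tan(E,x)$ with $\Tan(X,y)$, which is $\approx\KK$ by Theorem \ref{allpoints}. The reverse inclusion being trivial, one obtains $\Tan(E,x)\approx\KK$.

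For global richness I would, with the same $x=a_j+s_j y$, estimate $d_H(T_{x,t}(E), T_{a_j,t}(A))$ as $t\to\infty$. The base-point discrepancy contributes $|x-a_j|/t\leq s_j\diam(X)/t\to 0$, and each cluster $a_i+s_i X$ collapses toward the single point $(a_i-a_j)/t$ with error at most $s_i\diam(X)/t\leq 2^{-i}\diam(X)/t$, bounded uniformly in $i$ and tending to $0$ as $t\to\infty$. A triangle-inequality argument (together with the fact that intersecting with $Q$ is stable under Hausdorff-small perturbations) then yields $d_H(T_{x,t}(E), T_{a_j,t}(A))\to 0$, so every photograph of $A$ at $a_j$ is also a photograph of $E$ at $x$. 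Hence $P(E,x)\supset P(A,a_j)=\KK_0^+$, and since $\KK_0^+\approx\KK$ we conclude $P(E,x)\approx\KK$.

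The principal technical care is in the global estimate: one must verify that replacing each point $a_i$ of $A$ by the cluster $a_i+s_i X$ alters the Hausdorff distance between the corresponding zoom-outs by a uniform $O(1/t)$ term, even for $i$ far from $j$, and that the clipping to $Q$ does not spoil this in the Hausdorff limit. The uniform bound $s_i\leq 2^{-i}$ handles the first point, and the vanishing magnitude of the perturbation handles the boundary effect, so no further obstacle arises.
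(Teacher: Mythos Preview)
Your construction is essentially the paper's: attach a scaled copy of the locally rich set at each point of $A$, with scales small enough for separation and decaying fast enough to be invisible when zooming out; the paper indexes by layer and uses the scale $2^{-n}\delta_n$ on $\widetilde{\gamma}_n$ while you enumerate $A$ and impose $s_j\le 2^{-j}$, but this is only bookkeeping. The one loose end is the parenthetical that ``intersecting with $Q$ is stable under Hausdorff-small perturbations'' --- that is false in general (a cluster centered just outside $Q$ can poke inside, far from the clipped set), so the blanket claim $d_H(T_{x,t}(E),T_{a_j,t}(A))\to 0$ is not quite justified, but the needed inclusion $P(E,x)\supset\KK_0^+$ still follows because the perturbation is $O(1/t)$ and the approximants $\gamma_n$ sit in the open cube, and the paper's own argument is equally terse on this point.
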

\begin{proof}
Let $A$ be the set of Theorem \ref{photo} and $R \subset Q$ a locally rich set with $0\in R$ and diameter less than one. Let
\[
E = \bigcup^\infty_{n=1} \bigcup_{x \in \widetilde{\gamma}_n} (x+2^{-n}\delta_nR),
\]
where $\delta_n$ is from \eqref{delta}. 
It's follows directly that $E$ is a closed set. For any $x \in E$, by the choice of $2^{-n}$ we still have that $P(E, x)= \KK_0^+$. By the definition of $(\widetilde{\gamma}_n)^
{\infty}_{n=1}$ and $(\delta_n)^
{\infty}_{n=1}$, we have that $( a +2^{-m}\delta_m R) \cap ( b +2^{-n}\delta_n R) = \emptyset$  for $a \in\widetilde{\gamma}_m, b \in \widetilde{\gamma}_n$  with $a \neq b$. Thus by the local richness of $R$ we have $\Tan(E,x) \approx \KK$. We complete the proof by the simple fact that $\KK_0^+\approx \KK$.  
\end{proof}

\subsection*{Acknowledgements}
The authors would like to thank Antti K{\"a}enm{\"a}ki for reading the manuscript and giving useful comments, and Tuomo Ojala and Ville Suomala for many helpful discussions. Especially we would like to thank the anonymous referee for carefully reading the manuscript and giving exellent comments, and thus improving the quality of this article.

\bibliographystyle{amsplain}
\bibliography{Bibliography.bib}
\end{document}